\title[X-ray transform in non-smooth geometry]{Microlocal analysis of the X-ray transform in non-smooth geometry}
\author{Joonas Ilmavirta}
\email{joonas.ilmavirta@jyu.fi}
\address{Department of Mathematics and Statistics, University of Jyv\"askyl\"a}
\author{Antti Kykk\"anen}
\email{antti.k.kykkanen@jyu.fi}
\address{Department of Mathematics and Statistics, University of Jyv\"askyl\"a}
\author{Kelvin Lam}
\email{klam0008@uw.edu}
\address{Department of Mathematics, University of Washington}
\date{\today}
\newtheorem{theorem}{Theorem}
\newtheorem{proposition}[theorem]{Proposition}
\newtheorem{lemma}[theorem]{Lemma}
\newtheorem{corollary}[theorem]{Corollary}
\theoremstyle{definition}
\newtheorem{definition}[theorem]{Definition}
\newtheorem{remark}[theorem]{Remark}
\DeclareMathOperator{\supp}{supp}
\DeclareMathOperator{\Lip}{Lip}
\DeclareMathOperator{\Op}{Op}
\DeclareMathOperator{\Id}{Id}
\newcommand{\Vol}{\mathrm{Vol}}
\newcommand*{\N}{\mathbb{N}}
\newcommand*{\R}{\mathbb{R}}
\newcommand{\der}{\mathrm{d}}
\newcommand{\eps}{\varepsilon}
\newcommand{\abs}[1]{\left\vert#1\right\vert}
\newcommand{\norm}[1]{\left\lVert#1\right\rVert}
\newcommand{\doo}[1]{\partial_{\mathrm{#1}}}
\newcommand{\antti}[1]{}
\newcommand{\kelvin}[1]{}
\newcommand{\joonas}[1]{}
\begin{document}

\maketitle

\begin{abstract}
We prove that the geodesic X-ray transform is injective on $L^2$ when the Riemannian metric is simple but the metric tensor is only finitely differentiable. The number of derivatives needed depends explicitly on dimension, and in dimension 2 we assume $g\in C^{10}$. Our proof is based on microlocal analysis of the normal operator: we establish ellipticity and a smoothing property in a suitable sense and then use a recent injectivity result on Lipschitz functions. When the metric tensor is $C^k$, the Schwartz kernel is not smooth but $C^{k-2}$ off the diagonal, which makes standard smooth microlocal analysis inapplicable.
\end{abstract}

\section{Introduction}

We show that on a simple Riemannian manifold $(M,g)$ where $g\in C^k$ for a finite and explicit~$k$ the geodesic X-ray transform is injective on~$L^2$ (Theorem~\ref{thm:xrt-injective}).
We do this using a typical two-step approach, first showing that a function in the kernel of the transform is smoother than assumed a priori and then showing that injectivity holds for smooth functions.
Both of the two steps of the proof have to be adapted to low regularity.
The ``smooth'' injectivity (on Lipschitz functions) was established in~\cite{IlmKyk2021}, so it remains to prove that a function in the kernel of the X-ray transform has to be Lipschitz.

This regularity result (Theorem~\ref{thm:elliptic-regularity}) is based on microlocal analysis of the normal operator.
This normal operator is not a pseudodifferential operator in the usual sense because the ``smooth'' off-diagonal part of the Schwartz kernel is only~$C^{k-2}$.
Also, when the metric tensor is not infinitely differentiable, the Sobolev scale of~$H^s$ spaces only makes sense for a bounded range of indices~$s$ in both the positive and the negative direction.
These two issues mean that the concepts of ellipticity, smoothing, and a parametrix need careful treatment.

\subsection{Main results}

We consider two operators: The X-ray transform~$I$ and its normal operator~$N$.
These are defined separately, and we only prove that $N=I^*I$ when acting on $L^2$ functions.
Precise definitions of the operators and spaces we employ are given in section~\ref{sec:preliminaries} below.

We prove two main theorems. Theorem~\ref{thm:elliptic-regularity} concerns functions in the kernel of the operator~$N$ and proves that they have, a priori, improved regularity. Theorem~\ref{thm:xrt-injective} can be compared to a recent result in~\cite{IlmKyk2021}. We prove that the X-ray transform is injective on~$L^2(M)$ while requiring more metric regularity whereas~\cite[Theorem 1]{IlmKyk2021} proves that the X-ray transform is injective only on Lipschitz functions.

\begin{theorem}
\label{thm:elliptic-regularity}
Let $(M,g)$ be a simple manifold, $n \coloneqq \dim M \ge 2$ and $g \in C^k(M)$ for some $k \ge 7 + \frac{n}{2}$.
Then if $f \in H^s_c(M)$ for some $s > -k+6+\frac{n}{2}$ and $Nf = 0$, we have $f \in H^r_c(M)$ for all $s < r < k-6-\frac{n}{2}$.
\end{theorem}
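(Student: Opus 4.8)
The plan is to run the classical elliptic-regularity argument for the normal operator — construct a parametrix, apply it to the equation $Nf = 0$, and bootstrap — but to execute every step with explicit control on the number of derivatives, since when $g \in C^k$ only, neither the symbol calculus nor the Sobolev scale $H^s$ is available for all indices. First I would record the two structural facts about $N$ promised in the introduction: that $N$ is \emph{elliptic} of order $-1$ (its principal symbol is bounded below on the cosphere bundle) and that it enjoys a \emph{smoothing property in a suitable sense}, i.e.\ $N \colon H^s_c(M) \to H^{s+1}(M)$ continuously for $s$ in a bounded window dictated by $k$ and $n$. The obstruction to $N$ being an honest pseudodifferential operator is that, away from the diagonal, its Schwartz kernel is only $C^{k-2}$; I would isolate this by writing $N = A + R$, where $A$ carries the conormal singularity at the diagonal (an elliptic operator of order $-1$ whose amplitude has limited but sufficient regularity) and $R$ is an integral operator with a $C^{k-2}$ kernel supported off the diagonal, hence a finitely smoothing remainder.

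Next I would construct an \emph{approximate} inverse $Q$ of order $+1$ for the singular part $A$. Because the amplitude is only finitely differentiable, the usual asymptotic symbol expansion terminates after finitely many terms, so $Q$ cannot be a genuine parametrix; instead the low-regularity symbol calculus yields $QA = \Id + E$, where the error $E$ gains a fixed positive number of derivatives, say $E \colon H^\sigma_c \to H^{\sigma+\delta}$ for some $\delta > 0$ and all $\sigma$ in the admissible range. The precise count of derivatives consumed in each composition — one for inverting the principal symbol, more for each remainder estimate and for passing the $C^{k-2}$ kernel of $R$ through $Q$ — is what produces the numerology $6 + \frac{n}{2}$ in the statement (with the $\frac{n}{2}$ entering through the Sobolev embeddings relating $C^j$ and $H^j$ scales); the hypothesis $k \ge 7 + \frac{n}{2}$ is exactly what guarantees $\delta > 0$, i.e.\ that there is genuine room to iterate.

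Finally I would feed the equation into the parametrix. Applying $Q$ to $Nf = 0$ gives $QAf + QRf = 0$, that is $(\Id + E)f = -QRf$, so
\[
f = -Ef - QRf.
\]
Both $E$ and $QR$ carry $f$ into a strictly more regular space — the former by the smoothing of $E$, the latter because $R$ smooths by $k-2$ while $Q$ costs only one derivative — all within the window where the $H^s$ scale is defined. A bootstrap then promotes $f \in H^s_c(M)$ to $f \in H^{s+\delta}_c(M)$, and iterating raises the regularity by $\delta$ at each pass until it saturates at the ceiling $r < k - 6 - \frac{n}{2}$ forced by the finite smoothness of the kernel; the floor $s > -k + 6 + \frac{n}{2}$ is the symmetric constraint ensuring $f$ starts inside the admissible range on which $N$, $Q$, and $R$ all act. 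I expect the main difficulty to be precisely this bookkeeping: verifying that the compositions $QA$ and $QR$ and each step of the bootstrap stay within the finite Sobolev window where the low-regularity calculus closes, and quantifying the exact derivative losses so that the interval $(s, k-6-\frac{n}{2})$ is nonempty. Establishing that $E$ genuinely gains derivatives, rather than merely being bounded, despite the truncated and only finitely differentiable symbol expansion, is the technical heart of the argument.
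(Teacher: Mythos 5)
Your plan is essentially the paper's proof: a leading-order parametrix $P$ of order $+1$ for the elliptic operator $N$, built in a low-regularity symbol calculus (the paper uses Marschall's), with the error shown to be smoothing by a small positive amount and a bootstrap to climb the Sobolev ladder; the numerology $6+\frac{n}{2}$ and the finite admissible window arise exactly as you predict. The one point worth noting is that the paper does not get the gain of the error term from a terminating symbol expansion but from a commutator theorem, and the gain $\tau$ is an \emph{arbitrarily small} positive number traded off against the width of the admissible Sobolev window (the lower endpoint depends on $\tau$), so the bootstrap proceeds in $m$ steps of size $\tau$ rather than by a fixed $\delta$.
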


Theorem~\ref{thm:elliptic-regularity} can be applied to geodesic X-ray tomography in low metric regularity assuming that the X-ray transform~$I$ acts on~$L^2(M)$, since then $N = I^\ast I$ is in fact the normal operator for the X-ray transform~$I$.

\begin{proposition}
\label{prop:on-l2}
Let $(M,g)$ be a simple manifold with $g \in C^k(M)$ for some $k \ge 2$. Then $I^\ast I = N$ on~$L^2(M)$.
\end{proposition}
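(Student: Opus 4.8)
The plan is to treat the kernel definition of $N$ given in the preliminaries as fixed and to verify that it agrees with the Hilbert-space composition $I^\ast I$ on $L^2(M)$. The whole argument reduces to the Santaló formula together with a geodesic polar change of variables, and the only real work is checking that these tools survive the weakened hypothesis $g \in C^k$, $k \ge 2$. First I would establish that $I$ is bounded from $L^2(M)$ into the $L^2$ space on the space of geodesics carrying the natural measure for which Santaló's formula holds. Since $(M,g)$ is simple, its boundary is strictly convex and there are no conjugate points, so every maximal geodesic has length bounded uniformly by $\mathrm{diam}(M)$; applying the Cauchy--Schwarz inequality to $If(\gamma)=\int f(\gamma(t))\,\der t$ and then integrating over all geodesics via Santaló returns a constant multiple of $\norm{f}_{L^2(M)}^2$. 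This gives the boundedness of $I$, hence the existence of a bounded Hilbert-space adjoint $I^\ast$.

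Next I would identify $I^\ast$ explicitly. Writing out $\ip{If}{h}=\ip{f}{I^\ast h}$ and again invoking Santaló to pass from integration over geodesics to integration over $SM$ shows that $I^\ast$ is the back-projection operator, averaging a function on the geodesic space over all geodesics through a given point of $M$. This step is a pairing computation and requires only the validity of the coarea-type Santaló identity established for the bounded operator $I$.

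I would then compute $I^\ast I f$ for $f \in C_c(\iM)$ and match it with $N f$. Using the back-projection form of $I^\ast$, one writes $I^\ast I f(x)$ as an integral over $S_xM$ of the geodesic integrals of $f$ along the geodesics emanating from $x$, i.e.\ an integral over geodesic polar coordinates $(t,v)$. Changing variables from $(t,v)$ to $y=\exp_x(tv)\in M$, the Jacobian of $\exp_x$ in these coordinates produces exactly the weight $d_g(x,y)^{1-n}$ appearing in the kernel that defines $N$, so that $I^\ast I f(x) = N f(x)$ pointwise for such $f$. Because $C_c(\iM)$ is dense in $L^2(M)$ and both $I^\ast I$ and $N$ are bounded on $L^2(M)$, the identity extends to all of $L^2(M)$.

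The main obstacle is justifying these manipulations in the reduced regularity $g \in C^2$, where the usual smooth-geometry reasoning must be checked directly. With $g \in C^k$ and $k \ge 2$, the Christoffel symbols lie in $C^{k-1}\subseteq C^1$, so the geodesic equation has a $C^1$ right-hand side and the geodesic flow, and hence $\exp_x$, is $C^1$ with continuous Jacobian; simplicity guarantees that $\exp_x$ has no critical points and is a diffeomorphism onto its image, which is precisely what the change-of-variables formula for Lebesgue integrals requires. Finally, the kernel singularity $d_g(x,y)^{1-n}$ is locally integrable for $n \ge 2$, since in geodesic polar coordinates the volume element contributes a factor $\sim r^{n-1}\,\der r$ that absorbs it; this integrability both shows that $N$ is bounded on $L^2(M)$ and legitimizes the Fubini and change-of-variables steps. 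No regularity beyond $C^2$ is needed, consistent with the hypothesis $k \ge 2$.
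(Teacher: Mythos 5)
Your argument is correct, and it lands on essentially the same standard computation that the paper invokes: the paper's entire proof of this proposition is a citation of \cite[Lemma 8.1.5]{PSUbook} plus the observation that the computation survives $g \in C^k(M)$ with $k \ge 2$, which is exactly the regularity bookkeeping you carry out explicitly (Christoffel symbols in $C^1$, $C^1$ dependence of the flow, $\exp_x$ a $C^1$ diffeomorphism without critical points on a simple manifold, integrability of the $d_g(x,y)^{1-n}$ singularity). The one structural difference is which form of $N$ you match against. In the paper both $N$ and $I^\ast$ are \emph{defined} by explicit fibre-wise integrals --- $N$ by $2\int_{S_xM}\int_0^{\tau(x,v)} f(\gamma_{x,v}(t))\,\der t\,\der S_x(v)$ and $I^\ast$ by the back-projection formula --- so the cited computation is a pure unfolding of definitions: insert $If$ into the back-projection, observe that the inner integral then runs over the full maximal geodesic $[-\tau_-(x,v),\tau_+(x,v)]$, and use the symmetry $(v,t)\mapsto(-v,-t)$ on $S_xM$ to produce the factor $2$ and the one-sided integral; no change of variables on $M$ and no Santal\'o formula are needed. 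You instead verify agreement with the Schwartz-kernel form $2a(x,y)d_g(x,y)^{1-n}$, which forces the geodesic polar change of variables $y=\exp_x(tv)$; in the paper that step is deliberately separated into the first (unnumbered) lemma of Section 3, citing \cite[Lemma 8.1.10]{PSUbook}. Likewise, your Santal\'o/adjoint identification is only needed to see that the back-projection is genuinely the Hilbert adjoint of $I$; for the identity $I^\ast I = N$ as literally stated, with $I^\ast$ given by the back-projection formula, it can be skipped. None of this is a gap --- your route proves a little more than the proposition asks, at the cost of invoking more machinery, and all the low-regularity justifications you supply are the right ones.
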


\begin{theorem}
\label{thm:xrt-injective}
Let $(M,g)$ be a simple manifold, $n \coloneqq \dim M \ge 2$ and $g \in C^k(M)$ for some $k \ge 8 + n$. Then the X-ray transform~$I$ is injective on $L^2(M)$.
\end{theorem}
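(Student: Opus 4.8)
The plan is to combine the two main ingredients already assembled in the excerpt: the elliptic regularity result (Theorem~\ref{thm:elliptic-regularity}), which promotes a function in the kernel of $N$ to higher Sobolev regularity, and the Lipschitz injectivity result of~\cite{IlmKyk2021}, which handles the smooth end of the argument. The overall strategy is the standard two-step bootstrap mentioned in the introduction, so the proof of Theorem~\ref{thm:xrt-injective} should really be a short assembly of these pieces, with care taken that the various regularity thresholds are compatible under the hypothesis $k \ge 8+n$.

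Concretely, suppose $f \in L^2(M)$ with $If = 0$. Since $f \in L^2(M)$ is compactly supported in the compact manifold $M$, we have $f \in H^0_c(M)$. By Proposition~\ref{prop:on-l2}, on $L^2(M)$ the operator $N$ coincides with $I^\ast I$, so $If = 0$ immediately gives $Nf = I^\ast I f = 0$. This places us in a position to apply the elliptic regularity theorem. I would then verify that the hypotheses of Theorem~\ref{thm:elliptic-regularity} hold with $s = 0$: one needs $k \ge 7 + \frac{n}{2}$, which follows from $k \ge 8+n$, and one needs $s = 0 > -k + 6 + \frac{n}{2}$, i.e. $k > 6 + \frac{n}{2}$, which again follows comfortably from $k \ge 8+n$. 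The conclusion is that $f \in H^r_c(M)$ for every $r$ with $0 < r < k - 6 - \frac{n}{2}$.

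The final step is to upgrade this Sobolev regularity to the Lipschitz regularity required by~\cite{IlmKyk2021}. Here I would invoke the Sobolev embedding $H^r_c(M) \hookrightarrow C^{0,1}(M)$, which holds once $r > 1 + \frac{n}{2}$; the Lipschitz (i.e. $C^{0,1}$) target is the borderline case of the embedding $H^r \hookrightarrow C^{0,\alpha}$ and is the reason the threshold is pushed slightly above $\frac{n}{2}+1$. To apply the embedding I must produce an admissible $r$ satisfying both $r > 1 + \frac{n}{2}$ and $r < k - 6 - \frac{n}{2}$; such an $r$ exists precisely when $1 + \frac{n}{2} < k - 6 - \frac{n}{2}$, that is $k > 7 + n$, which is exactly guaranteed by the hypothesis $k \ge 8 + n$. (This comparison is presumably where the numerical threshold $8+n$ in the statement originates.) Thus $f$ is Lipschitz, and since $If = 0$, the Lipschitz injectivity result~\cite[Theorem~1]{IlmKyk2021} forces $f = 0$, completing the proof.

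The argument is structurally routine, so I do not expect a serious conceptual obstacle; the genuine work is already encapsulated in Theorem~\ref{thm:elliptic-regularity} and the cited Lipschitz injectivity. The one point demanding attention is bookkeeping of the regularity indices: I must confirm that the open interval of admissible $r$ is nonempty under $k \ge 8+n$ and that the Sobolev embedding into $C^{0,1}$ is applied at a legitimate index, taking care with the compact-support hypotheses on $M$ so that the embedding and the domain conditions of both cited results are genuinely met. The delicate issue, and the reason low regularity is not entirely free, is that the Sobolev scale is only available for a bounded range of indices when $g \in C^k$; I would therefore double-check that the value of $r$ chosen lies safely within the range on which $H^r_c(M)$ is meaningfully defined in this setting.
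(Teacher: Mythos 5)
Your overall strategy matches the paper's, and your bookkeeping of the regularity thresholds (applying Theorem~\ref{thm:elliptic-regularity} with $s=0$, then finding $r$ with $1+\frac{n}{2} < r < k-6-\frac{n}{2}$ so that Sobolev embedding gives Lipschitz regularity, which is where $k \ge 8+n$ enters) is essentially identical to what the paper does. However, there is a genuine gap at the very first step: you assert that $f \in L^2(M)$ is automatically in $H^0_c(M)$ because $M$ is compact. In this paper $H^s_c(M)$ means compactly supported in the \emph{interior}: the proof of Theorem~\ref{thm:elliptic-regularity} requires cut-offs $\phi, \psi \in C^\infty_c(M)$ with $\phi f = f$ and $\psi = 1$ on a neighbourhood of $\supp f$, which is impossible when $\supp f$ reaches $\partial M$ --- as it may for a general $f \in L^2(M)$. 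Moreover, the parametrix construction only yields interior regularity, so even if one could start the bootstrap, one would not obtain Lipschitz regularity up to the boundary of $M$, which is what the injectivity result of~\cite{IlmKyk2021} requires.

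The paper resolves this by passing to a simple extension $(\tilde M, \tilde g)$ of $(M,g)$ (whose existence is recorded in the preliminaries): the zero extension of $f$ to $\tilde M$ is compactly supported in the interior of $\tilde M$, satisfies $\tilde I f = 0$ because $f$ vanishes on $\tilde M \setminus M$, hence $\tilde N f = 0$ by Proposition~\ref{prop:on-l2}, and Theorem~\ref{thm:elliptic-regularity} applied on $\tilde M$ then gives $f \in H^r_c(\tilde M) \subseteq \Lip(\tilde M)$, so that $f \in \Lip_0(M)$ and the cited injectivity applies. Your argument needs this extension step inserted at the outset; everything after that point goes through as you wrote it.
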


The proofs of the theorems rely on microlocal tools. We study the so-called normal operator $N = I^\ast I$ related to the X-ray transform~$I$. We prove that~$N$ is a non-smooth elliptic operator and construct a principal parametrix with an error term smoothing of order $\tau \in (0,1)$. The construction and its implications use a non-smooth microlocal calculus developed in~\cite{marschall1996} and, in particular, we use the non-smooth symbol and operator classes, continuous Sobolev mapping properties and a commutator theorem there introduced. The details are recalled in section~\ref{sec:preliminaries}.

\subsection{Related results}

The geodesic X-ray transform on a Riemannian manifold has been studied in a variety of contexts and with a variety of tools~\cite{PSUbook,sharafutdinov1994,IlmaMon2019,PSU2014}. The current article focus on the aspect of not studying the X-ray transform directly but via the related normal operator. This approach has seen plenty of applications in $C^\infty$-smooth metric regularity.

In~\cite{PesUhl2005} it was proved that the normal operator on a simple Riemannian manifold is an elliptic pseudodifferential operator in the interior of the manifold --- a result that is essential in their proof that all two dimensional simple Riemannian manifolds are boundary rigid. The normal operator has also played a role in later developments in boundary rigidity~\cite{StefUhl2004,StefUhl2005}. Microlocal methods in relation to the normal operator are useful in geometries permitting conjugate points~\cite{StefUhl2008,StefUhl2012,HolUhl2018}. More recently, there has been interest in isomorphic mapping properties of the normal operator and its variants between suitably weighted function spaces~\cite{MMZ2023,MNP2019}.

Microlocal analysis of the normal operator in the X-ray tomography is in non-smooth geometries virtually unexplored. However, injectivity for the X-ray transform of Lipschitz scalar and $C^{1,1}$ tensor fields on simple $C^{1,1}$ manifolds was proved in two recent articles~\cite{IlmKyk2021,IlmKyk2023}, and injectivity is known for the scalar transform on spherically symmetric $C^{1,1}$ manifolds satisfying the Herglotz condition~\cite{DHI2017}.

The current article uses non-smooth microlocal methods. As references on pseudodifferential operators with symbols non-smooth in both variables we mention~\cite{IvecVoj2022,marschall1996} and as references to paradifferential methods we mention~\cite{Taylor1997}.

\subsection{Acknowledgements}

JI was supported by the Research Council of Finland (grant 351665).
AK was supported by the Research Council of Finland (grant 351656) and by the Finnish Academy of Science and Letters.
KL was supported by NSF.
We thank John M. Lee, Gabriel P. Paternain, Mikko Salo, Hart F. Smith, and Gunther Uhlmann for discussions.

\section{Preliminaries}
\label{sec:preliminaries}

In this section we introduce the geometric set-up, the function spaces, and the operators used throughout the article. 
We also recall the parts of the non-smooth calculus and theorems from~\cite{marschall1996} that are required for the proofs of our main results.

\subsection{Simple manifolds}

In this section we recall the geometric set-up in which we study geodesic X-ray transforms.
Since the Riemannian metrics we consider are not $C^\infty$-smooth, we include the following definition for clarity.

\begin{definition}
\label{def:simple}
Let $k$ be an integer so that $k \ge 2$. Let~$M$ be a compact smooth manifold with a smooth boundary and equip~$M$ with a~$C^k$ smooth Riemannian metric~$g$. We say that $(M,g)$ is simple if~$M$ is $C^k$-diffeomorphic to the closed Euclidean unit ball in~$\R^n$ and the following hold:
\begin{enumerate}
    \item The boundary~$\partial M$ is strictly convex in the sense of the second fundamental form.
    \item The manifold is non-trapping, i.e., all geodesics hit the boundary in a finite time.
    \item There are no conjugate points in~$M$.
\end{enumerate}
\end{definition}

When the Riemannian metric~$g$ is $C^\infty$-smooth, definition~\ref{def:simple} is equivalent to any standard definition of a simple manifold.

\begin{remark}
\label{rem:to-the-ball}
Our analysis of the non-smooth operators is carried out on the closed Euclidean unit ball, which allows us to use smooth global coordinates on our manifold. This allows us to use smooth functions on the manifold without having to worry about limitations on regularity indices. However, we have to interpret our results in the original manifold via a $C^k$-diffeomorphism which restrict the meaningful range of any regularity indices (H\"older or Sobolev) to $[-k,k]$ in the up coming sections.
\end{remark}

To prove Theorem~\ref{thm:xrt-injective} we will use~\cite[Theorem 1]{IlmKyk2021}. There the authors use a slightly different notion is simplicity, but definition~\ref{def:simple} is equivalent to their definition for Riemannian metrics $g \in C^k(M)$ when $k \ge 10$ which holds in the case of Theorem~\ref{thm:xrt-injective}. The proof of equivalence of definitions in~\cite[Theorem 2]{IlmKyk2021} carries over to our simple Riemannian metrics $g \in C^k(M)$ for $k \ge 10$ by the arguments given in~\cite{IlmKyk2021} and since we assume that~$M$ is $C^k$-diffeomorphic to the closed unit ball in~$\R^n$.

Since the conditions defining a simple manifold $(M,g)$ with $g \in C^k(M)$ are open, there is a small open extension $M \subseteq U \subseteq \R^n$ and an extension~$\tilde g$ of~$g$ so that $(\overline{U},\tilde g)$ is a simple manifold with $\tilde g \in C^k(\overline{U})$. For details on the existence of simple extension we refer the reader to~\cite[Proposition 3.8.7]{PSUbook}.

\subsection{Function spaces}

Our definition of a simple manifold includes global coordinates.
Therefore no partitions of unity are needed and the definitions of some operators and function spaces are somewhat simplified.

Let $(M,g)$ be a simple manifold where $g \in C^k(M)$ for some $k \ge 2$. Since~$M$ is $C^k$-diffeomorphic to the closed Euclidean unit ball $\overline{B} \subseteq \R^n$ we take $M = \overline{B}$ from now on and all computations are to be interpreted via a $C^k$-diffeomorphism as explained in remark~\ref{rem:to-the-ball}.

We use smooth global coordinates $(x^1,\dots,x^n)$ in the definitions of our functions spaces. We use the Riemannian volume for~$\der\Vol_g$ to define~$L^2(M)$ in the standard way i.e. $L^2(M) = L^2(M,\der\Vol_g)$.

For $s>0$ we denote by $H^s_c(M)$ the space of compactly supported functions in $H^s(M)$. For $s > 0$ we let $H^{-s}(M)$ be the continuous dual of $H^s(M)$ and $H^{-s}_c(M)$ be the subspace of compactly supported distributions.

Similarly, we define the Zygmund space $C^r_\ast(M)$ to be the space of continuous functions~$f$ on~$M$ whose zero extension to~$\R^n$ is in~$C^r_\ast(\R^n)$ and the norm of a such functions is its $C^r_\ast(\R^n)$-norm.

\subsection{Geodesic X-ray transforms}

Let $(M,g)$ be a simple manifold where $g \in C^k(M)$ for some $k \ge 2$. For a given unit vector $v \in T_xM$ there is a unique geodesic~$\gamma_{x,v}$ corresponding to the initial conditions $\gamma_{x,v}(0) = x$ and $\dot\gamma_{x,v}(0) = v$. Since the manifold is non-trapping, the geodesic~$\gamma_{x,v}$ is defined on a maximal interval of existence $[-\tau_-(x,v),\tau_+(x,v)]$ where $\tau_\pm(x,v) \ge 0$ and we abbreviate $\tau \coloneqq \tau_+$.

The X-ray transform~$If$ of a function $f \in L^2(M)$ is defined for all inwards pointing unit vectors $(x,v) \in \doo{in}SM$ by the formula
\begin{equation}
\label{eqn:back-projection}
If(x,v)
\coloneqq
\int_0^{\tau(x,v)}
f(\gamma_{x,v}(t))
\,\der t.
\end{equation}
The backprojection~$I^\ast h$ of a function~$h$ on~$L^2(\doo{in}(SM))$ is defined for all $x \in M$ by the formula
\begin{equation}
\label{eqn:normal-operator}
I^\ast h(x)
\coloneqq
\int_{S_xM}
h(\phi_{-\tau(x,-v)}(x,v))
\,\der S_x(v).
\end{equation}

Finally, we define the operator~$N$ which we will call the normal operator and which will be the main focus of our study. The normal operator is defined on~$L^2(M)$ by the formula
\begin{equation}
Nf(x)
=
2\int_{S_xM}
\int_0^{\tau(x,v)}
f(\gamma_{x,v}(t)))
\,\der t
\,\der S_x(v).
\end{equation}
We will prove in proposition~\ref{prop:on-l2} that~$N$ agrees with the composition~$I^\ast I$ on~$L^2(M)$, justifying calling it the normal operator.

\subsection{Non-smooth operators and symbols}
\label{sec:non-smooth-oper}

In this section we recall the basics of a non-smooth pseudodifferential calculus introduced in~\cite{marschall1996}. We rerecord the results that are relevant to the current work for the convenience of the reader.

Let $m \in \R$ and $r,L \in \N$ be given.
Multi-indices in~$\N^n$ are denoted by~$\alpha$ and~$\beta$. For all $\rho,\delta \in [0,1]$ the symbol class~$S^m_{\rho\delta}(r,L)$ consists of continuous functions $p \colon \R^n \times \R^n \to \R$ satisfying the estimates
\begin{equation}
\abs{\partial_\xi^\alpha p(x,\xi)}
\le
C_\alpha
(1 + \abs{\xi})^{m - \rho\abs{\alpha}}
\end{equation}
and
\begin{equation}
\norm{\partial_\xi^\alpha p(\,\cdot\,,\xi)}_{C^r_\ast}
\le
C_{\alpha r}
(1 + \abs{\xi})^{m + r\delta - \rho\abs{\alpha}}
\end{equation}
for all $\abs{\alpha} \le L$.

Given a symbol $p \in S^m_{\rho\delta}(r,L)$ the corresponding operator~$\Op(p)$ is defined by its action
\begin{equation}
\Op(p)f(x)
=
\int_{\R^n}
e^{ix\cdot\xi}
p(x,\xi)
\hat f(\xi)
\,\der \xi
\end{equation}
on functions~$f$ in~$L^2(\R^n)$. The identity operator~$\Id$ is the operator corresponding to the constant symbol~$1$.

We end the preliminaries by isolating two useful results on the operators of class $\Psi^m(r,L)$. For the proofs of the lemmas we refer the reader to~\cite{marschall1996}.

\begin{lemma}[\cite{marschall1996} Theorem 2.1.]
\label{lma:sobolev-property}
Let $p \in S^m_{\rho\delta}(r,L)$ and consider the operator $P \coloneqq \Op(p)$. Suppose that $\rho,\delta \in [0,1]$ and $r,L > 0$ satisfy
\begin{equation}
\delta
\le
\rho,
\quad
L
>
\frac{n}{2},
\quad
r
>
\frac{1 - \rho}{1 - \delta}
\frac{n}{2}.
\end{equation}
Then the operator $P \colon H^{s+m}(\R^n) \to H^s(\R^n)$ is bounded when
\begin{equation}
(1-\rho)
\frac{n}{2}
-
(1-\delta)
r
<
s
<
r.
\end{equation}
\end{lemma}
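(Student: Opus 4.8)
This Sobolev mapping property is quoted from \cite{marschall1996}; the plan I would follow to establish it combines a Littlewood--Paley decomposition in frequency with the paradifferential principle that the finite $x$-regularity of the symbol limits how far the operator can spread frequencies. First I would fix a dyadic partition of unity $1 = \sum_{j\ge0}\psi_j$ with $\psi_j$ supported in the shell $\abs{\xi}\sim 2^j$ and set $P_j \coloneqq \Op(p\,\psi_j)$, so that $P = \sum_j P_j$. Writing the target norm as $\norm{Pf}_{H^s}^2 \sim \sum_k 2^{2ks}\norm{\psi_k(D)Pf}_{L^2}^2$, the whole estimate reduces to controlling the bi-infinite family of blocks $\psi_k(D)P_j$ on $L^2$, with weight $2^{ks}$ on the output and $2^{(s+m)j}$ on the input.

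Two estimates would drive the argument. First, each block is $L^2$-bounded with the right order: the bound $\abs{\partial_\xi^\alpha p} \le C_\alpha(1+\abs{\xi})^{m-\rho\abs{\alpha}}$ for $\abs{\alpha}\le L$, together with $L > \frac n2$, gives a Calder\'on--Vaillancourt type kernel estimate on each dyadic shell and hence $\norm{P_j}_{L^2\to L^2} \le C\,2^{jm}$ uniformly in $j$; only $\le L$ integrations by parts are used, which is exactly what the finite $\xi$-smoothness allows. Second, and more delicately, the Zygmund regularity would control the frequency spreading: the estimate $\norm{\partial_\xi^\alpha p(\,\cdot\,,\xi)}_{C^r_\ast} \le C_{\alpha r}(1+\abs{\xi})^{m+r\delta-\rho\abs{\alpha}}$ forces decay of order $\abs{\eta}^{-r}$ in the $x$-frequency $\eta$, so that $\psi_k(D)P_j$ is negligible unless the shells $2^k$ and $2^j$ are comparable, yielding off-diagonal decay of the form $\norm{\psi_k(D)P_j}_{L^2\to L^2} \le C\,2^{jm}2^{-r\abs{k-j}}$. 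This encodes the fact that the operator cannot manufacture more $x$-regularity than the symbol carries.

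I would then assemble these through an almost-orthogonality (Cotlar--Stein or Schur) argument on the matrix indexed by $(j,k)$. The upper constraint $s<r$ should emerge from the summability of $2^{ks}2^{-r(k-j)}$ as $k\to\infty$: the output cannot be pushed past the symbol's $x$-smoothness. The lower constraint $(1-\rho)\frac n2 - (1-\delta)r < s$ should arise on the opposite side $k<j$ from balancing the $\xi$-growth $(1+\abs{\xi})^{m+r\delta-\rho\abs{\alpha}}$ against the decay and the threshold $L > \frac n2$, with the hypothesis $r > \frac{1-\rho}{1-\delta}\frac n2$ serving exactly to make the window nonempty. The hard part will be the sharp exponent bookkeeping in the second estimate --- extracting the decay rate $2^{-r\abs{k-j}}$ from the Zygmund norm while spending no more than $L$ frequency derivatives, and then confirming that the double sum converges on the open interval and on no larger one. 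An equivalent and possibly cleaner route would replace the blocks by a symbol-smoothing splitting $p = p^\# + p^\flat$, treating $p^\#$ (a genuine $S^m_{1,\delta'}$ symbol) by the smooth Calder\'on--Vaillancourt theorem and $p^\flat$ (a remainder of sufficiently negative order) directly; optimizing the smoothing scale reproduces the same two bounds on $s$.
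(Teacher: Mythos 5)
You should first note that the paper does not prove this lemma at all: it is imported verbatim as Theorem 2.1 of \cite{marschall1996}, and the authors explicitly defer to that reference for the proof. So the only meaningful comparison is with Marschall's own argument, and your outline follows the standard route --- dyadic decomposition of the symbol, $L^2$ bounds on the blocks, off-diagonal decay extracted from the $C^r_\ast$ regularity in $x$, and an almost-orthogonality summation --- which is in the same spirit as what Marschall actually does (he reduces to elementary symbols supported on dyadic frequency shells and sums the resulting matrix of blocks).

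That said, what you have written is a plan rather than a proof, and the two places you flag as ``the hard part'' are exactly where the content of the theorem lives. First, the block bound $\norm{P_j}_{L^2\to L^2}\le C\,2^{jm}$ cannot be obtained from a pointwise kernel estimate plus Schur's test using only $L>\frac{n}{2}$ derivatives in $\xi$: integrating by parts $L$ times gives a kernel decaying like $(1+2^j\abs{z})^{-L}$, which is integrable only for $L>n$. The threshold $L>\frac{n}{2}$ is the signature of a Plancherel-type argument on the frequency side, and that argument is also the source of the term $(1-\rho)\frac{n}{2}$ in the lower bound for $s$; your sketch derives neither. Second, the off-diagonal decay $2^{-r\abs{k-j}}$ is the correct behaviour only on the side $k>j$ (output frequency above the $x$-oscillation of the symbol); on the side $k<j$ the gain is governed by $\rho$ and $\delta$, which is where the hypothesis $\delta\le\rho$ and the factor $(1-\delta)r$ must enter --- your proposal never invokes $\delta\le\rho$ and asserts the lower endpoint rather than computing it. These are not stylistic omissions: the precise interval $(1-\rho)\frac{n}{2}-(1-\delta)r<s<r$ is the whole point of the statement, and as written your argument does not yet produce it. The symbol-smoothing alternative you mention at the end is a legitimate second route, but it carries exactly the same exponent bookkeeping.
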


\begin{lemma}[\cite{marschall1996} Theorem 3.5.]
\label{lma:composition}
Let $p \in S^{m_1}_{\rho_1\delta_1}(r,L)$ and $q \in S^{m_2}_{\delta_2\rho_2}(r,L+\frac{n}{2}+1)$ and suppose that $\delta_1 < \rho_2$ and $L > \frac{n}{2}$. Denote the corresponding operators by $P \coloneqq \Op(p)$ and $Q \coloneqq \Op(q)$. Let $\tau \in (0,1]$ be such that $0 < \tau < r$. Define
\begin{equation}
\delta
\coloneqq
\max\{\delta_1+(\rho_1-\delta_2)\tau,\delta_2\}
\quad
\text{and}
\quad
\rho
\coloneqq
\min\{\rho_1,\rho_2\}.
\end{equation}
Assume that $\delta \le \rho$ and in the case $\rho < 1$ suppose in addition that $r > \frac{1-\rho}{1-\delta}\frac{n}{2} + \tau$. Then the commutator
\begin{equation}
QP
-
\Op(qp)
\colon
H^{s+m_1+m_2-(\rho_1-\delta_2)\tau}(\R^n)
\to
H^s(\R^n)
\end{equation}
is bounded when
\begin{equation}
\max\{-m_2,0 \}
+
(1-\rho)\frac{n}{2}
-
(1-\delta)(r-\tau)
<
s
<
r-\max \{m_2, 0\}.
\end{equation}
\end{lemma}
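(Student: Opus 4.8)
I would prove this composition estimate by first reducing it to a symbol-level statement and then feeding the resulting non-smooth symbol into Lemma~\ref{lma:sobolev-property}. The key object is the remainder operator $R \coloneqq QP - \Op(qp)$. Using the double-symbol formula for the composition associated with the quantization fixed in Section~\ref{sec:non-smooth-oper}, I would write its symbol as the regularized oscillatory integral
\[
\sigma_{QP}(x,\xi)
=
\mathrm{Os}\text{-}\!\iint
e^{-iy\cdot\eta}
\,
q(x,\xi+\eta)
\,
p(x+y,\xi)
\,\der y \,\der\eta ,
\]
where the regularization is needed precisely because the symbols are only finitely smooth in $x$. Replacing $q(x,\xi+\eta)$ by its value $q(x,\xi)$ at $\eta=0$ reproduces, after the $(y,\eta)$-integration collapses by Fourier inversion, exactly the product $q(x,\xi)p(x,\xi)$, so that
\[
\sigma_{R}(x,\xi)
=
\mathrm{Os}\text{-}\!\iint
e^{-iy\cdot\eta}
\,
\bigl[q(x,\xi+\eta)-q(x,\xi)\bigr]
\,
p(x+y,\xi)
\,\der y \,\der\eta .
\]
Everything then reduces to controlling this single increment of $q$.

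Because the finite $x$-regularity forbids the usual full asymptotic expansion, the heart of the argument is to extract a fractional gain of order $\tau$ from the increment $q(x,\xi+\eta)-q(x,\xi)$. I would decompose both symbols in the spatial frequency by a Littlewood--Paley partition, $p=\sum_{j\ge 0}p_j$ and $q=\sum_{k\ge 0}q_k$, whose $x$-Fourier supports sit in dyadic annuli $\{\abs{\eta}\sim 2^{j}\}$, so that the $C^r_\ast$-estimates defining the classes become block bounds of the form
\[
\norm{\partial_\xi^\alpha p_j(\,\cdot\,,\xi)}_{L^\infty}
\lesssim
2^{-jr}(1+\abs{\xi})^{m_1 + r\delta_1 - \rho_1\abs{\alpha}},
\]
and symmetrically for $q_k$ with $(\delta_2,\rho_2)$ in place of $(\rho_1,\delta_1)$. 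Substituting these into the integral for $\sigma_R$, integrating by parts in $y$ and $\eta$ to exploit the frequency decay, and bounding the increment by a $\tau$-order Hölder estimate (admissible precisely because $0<\tau<r$), I would show that $\sigma_R$ belongs to a non-smooth symbol class of order $m_1+m_2-(\rho_1-\delta_2)\tau$ whose spatial-regularity index is reduced from $r$ to $r-\tau$, the reduction recording the $\tau$ units of $C^r_\ast$ smoothness spent on the increment. Careful bookkeeping of the two symbol seminorms through these block estimates is exactly what produces the claimed order; the hypotheses $\delta_1<\rho_2$, $L>\tfrac{n}{2}$, $\delta\le\rho$, and the reinforced bound $r>\tfrac{1-\rho}{1-\delta}\tfrac{n}{2}+\tau$ are what keep the defining seminorms finite and the resulting class admissible.

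Finally I would obtain the mapping property by combining Lemma~\ref{lma:sobolev-property}, applied to $\Op(\sigma_R)$ with the reduced regularity index $r-\tau$, with the separate Sobolev windows of $P$ and $Q$. The two-sided interval stated for $s$ is the intersection of two admissibility conditions: the range coming from the order-$\bigl(m_1+m_2-(\rho_1-\delta_2)\tau\bigr)$ remainder symbol (which, through Lemma~\ref{lma:sobolev-property} with index $r-\tau$, produces the $-(1-\delta)(r-\tau)$ term at the lower endpoint), and the range guaranteeing that $QP$ and $\Op(qp)$ are separately well-defined on the relevant scale; the latter forces the corrections $\max\{-m_2,0\}$ on the left and $-\max\{m_2,0\}$ on the right, which track the order of $Q$ and the window into which $P$ must land for $Q$ to act. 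I expect the main obstacle to be precisely this finite-regularity bookkeeping: carrying the remainder with only a fractional Taylor gain while keeping every intermediate symbol inside a class to which Lemma~\ref{lma:sobolev-property} applies, and verifying convergence of the dyadic sums at both the high- and low-frequency ends, which is what pins down the admissible interval for $s$ and forces the appearance of $\tau$ in the lower bound.
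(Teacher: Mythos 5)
This statement is not proved in the paper at all: it is quoted verbatim as Theorem~3.5 of Marschall's article, and the text explicitly defers the proof to that reference. So there is no in-paper argument to compare against, and what you have written must be judged as a standalone proof sketch of Marschall's composition theorem. As such it gestures at the right circle of ideas (the oscillatory-integral representation of the remainder symbol, dyadic decomposition in the spatial frequency, a fractional gain of order $\tau$ paid for by a loss of $\tau$ units of Zygmund regularity, and a final appeal to the Sobolev boundedness theorem), but it is an outline rather than a proof, and it contains one genuine conceptual slip together with several steps that would not go through as described.

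The slip is where the gain of $(\rho_1-\delta_2)\tau$ is supposed to come from. You propose to extract it by ``bounding the increment $q(x,\xi+\eta)-q(x,\xi)$ by a $\tau$-order H\"older estimate (admissible precisely because $0<\tau<r$).'' But that increment is taken in the frequency variable $\xi$, in which the symbols are assumed to have $L$ (respectively $L+\tfrac n2+1$) genuine derivatives; the index $r$ and the constraint $\tau<r$ govern regularity in $x$, which is a different variable. In arguments of this type the fractional gain and the degradation $r\mapsto r-\tau$ both arise from a symbol-smoothing decomposition in $x$ at a $\xi$-dependent scale (one trades $\tau$ units of $C^r_\ast$ regularity in $x$ for a gain of $(\rho_1-\delta_2)\tau$ in the order), not from a H\"older bound on the $\xi$-increment of $q$. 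Beyond this, two further gaps remain. First, you assert that the remainder is $\Op(\sigma_R)$ for a single symbol lying in a class $S^{m_1+m_2-(\rho_1-\delta_2)\tau}_{\rho\delta}(r-\tau,\cdot)$ to which Lemma~\ref{lma:sobolev-property} applies; this is exactly the step that needs proof, since the oscillatory integral for $\sigma_R$ does not obviously satisfy the required $\xi$-derivative bounds up to order $L$ when the factors are only finitely regular, and the standard route is to estimate the remainder directly at the operator level via almost-orthogonality of the dyadic blocks. The asymmetric hypothesis that $q$ carries $L+\tfrac n2+1$ $\xi$-derivatives while $p$ carries only $L$ is a fingerprint of that operator-level summation, and your sketch never uses or explains it. Second, the endpoint corrections $\max\{-m_2,0\}$ on the left and $-\max\{m_2,0\}$ on the right of the admissible interval for $s$ are asserted to ``track the order of $Q$'' but are not derived; pinning them down requires running the Sobolev estimates for the individual pieces and intersecting the resulting windows, which is precisely the bookkeeping you defer. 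None of this makes the strategy wrong, but as written the proposal does not constitute a proof of the stated theorem.
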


\section{Parametrix construction for the normal operator}

This section provides a detailed analysis of the operator~$N$ culminating in a leading order parametrix construction in the non-smooth symbol calculus presented in section~\ref{sec:non-smooth-oper}. The parametrix construction is the main tool used in the proofs of our main theorems.

\subsection{The Schwartz kernel and the symbol}

The objective of this section is to study the operator~$N$ as a non-smooth elliptic pseudodifferential operator. We begin from the Schwartz kernel of the operator and analyse its symbol by dissecting it into manageable parts. The end result containing the principal part of the symbol is presented in corollary~\ref{cor:classes-of-N}.

\begin{lemma}
Let $(M,g)$ be a simple manifold with $g \in C^k(M)$ for some $k \ge 2$. Let $a(x,y) = \det(\der\exp_x|_{\exp_x^{-1}(y)})^{-1}$. Then for all $f \in L^2(M)$ we have
\begin{equation}
Nf(x)
=
2\int_M
a(x,y)
d_g(x,y)^{1-n}
f(y)
\,\der\Vol_g(y).
\end{equation}
\end{lemma}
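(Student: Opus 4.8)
The plan is to compute $Nf(x)$ by unwinding the definition and converting the integral over the unit sphere bundle into an integral over $M$ using geodesic polar coordinates centered at $x$. The starting point is the defining formula
\begin{equation}
Nf(x)
=
2\int_{S_xM}
\int_0^{\tau(x,v)}
f(\gamma_{x,v}(t))
\,\der t
\,\der S_x(v),
\end{equation}
which is a double integral: an inner integral along each geodesic emanating from $x$ and an outer integral over all initial directions $v \in S_xM$. The natural approach is to recognize that the map $(t,v) \mapsto \exp_x(tv) = \gamma_{x,v}(t)$ is precisely the geodesic polar coordinate parametrization of $M$ around $x$, so I would substitute $y = \exp_x(tv)$ and rewrite the combined $(t,v)$-integral as an integral over $y \in M$.

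First I would set $y = \exp_x(tv)$, so that $t = d_g(x,y)$ is the Riemannian distance (valid since there are no conjugate points and the manifold is non-trapping, so $\exp_x$ is a diffeomorphism from its domain onto $M$) and $v = \exp_x^{-1}(y)/\nabs{\exp_x^{-1}(y)}$. The key computation is the change-of-variables Jacobian. In geodesic polar coordinates the Riemannian volume element factors as $\der\Vol_g(y) = J(x,t,v)\,\der t\,\der S_x(v)$, where $J$ is the Jacobian of the exponential map. Writing $y = \exp_x(tv)$ and using $r = t = d_g(x,y)$, the standard polar volume expansion gives $\der\Vol_g(y) = r^{n-1}\,\der r\,\der S_x(v)$ in normal coordinates, but more invariantly the full Jacobian of $\exp_x$ at the point $tv$ is $\det(\der\exp_x|_{tv})$. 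I would then relate this to $a(x,y) = \det(\der\exp_x|_{\exp_x^{-1}(y)})^{-1}$ and to the factor $d_g(x,y)^{1-n} = r^{1-n}$, so that
\begin{equation}
\der t \,\der S_x(v)
=
a(x,y)\,d_g(x,y)^{1-n}\,\der\Vol_g(y).
\end{equation}
Tracking this factorization carefully is the crux: the $r^{1-n}$ accounts for the Euclidean polar Jacobian while $a(x,y)$ corrects for the curvature-dependent distortion of the exponential map relative to the flat case.

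The main obstacle I anticipate is making the Jacobian bookkeeping precise and invariant, in particular verifying that the product of the radial factor $d_g(x,y)^{1-n}$ and the determinant factor $a(x,y)$ is exactly the inverse volume density that converts $\der t\,\der S_x(v)$ into $\der\Vol_g(y)$. Concretely, one must check that $\det(\der\exp_x|_{tv}) = r^{n-1}/a(x,y)$ up to the correct normalization of the sphere measure $\der S_x$, which requires care about whether $v$ ranges over unit vectors (so that the $tv$ versus $v$ distinction contributes the $t^{n-1} = r^{n-1}$ factor). Once the substitution is justified and the inner $t$-integral is absorbed into the $y$-integral over $M$, the formula
\begin{equation}
Nf(x)
=
2\int_M
a(x,y)\,d_g(x,y)^{1-n}\,f(y)
\,\der\Vol_g(y)
\end{equation}
follows directly. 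Since $g \in C^k$ ensures $\exp_x$ is $C^{k-1}$ and a diffeomorphism on a simple manifold, the density $a(x,y)$ is well-defined and the change of variables is legitimate for $f \in L^2(M)$; a brief density or Fubini argument handles the measurability and integrability bookkeeping.
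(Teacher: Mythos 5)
Your argument is correct and is exactly the standard geodesic polar coordinate computation that the paper relies on: the paper's proof simply cites \cite[Lemma 8.1.10]{PSUbook} (where this change of variables $y=\exp_x(tv)$, with $\der t\,\der S_x(v)=a(x,y)\,d_g(x,y)^{1-n}\,\der\Vol_g(y)$, is carried out for smooth $g$) and notes that the computation goes through verbatim for $g\in C^k$, $k\ge 2$. So you have spelled out the same proof the paper outsources to the reference; no gap.
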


\begin{proof}
The same formula is derived in~\cite[Lemma 8.1.10]{PSUbook} when $g \in C^\infty(M)$. The computation works when $g \in C^k(M)$ with $k \ge 2$. 
\end{proof}

The Schwartz kernel of the operator $N$ is
\begin{equation}
K(x,y)
=
2a(x,y)
d_g(x,y)^{1-n}
\end{equation}
on $M \times M$. We will construct leading order parametrices for operators on $\R^n$ related to the Schwartz kernels of the form
\begin{equation}
\label{eqn:tilde-K}
\tilde K(x,y)
\coloneqq
\psi(x)
2a(x,y)
d_g(x,y)^{1-n}
\det(g(y))^{\frac12}
\phi(y)
\end{equation}
where $\psi$ and $\phi$ are suitable cut-off functions in $\R^n$.

Consider $\Omega \subseteq M$ and consider $f \in H^s_c(M)$ so that $\supp f \subseteq \Omega$. We can choose a cut-off function $\phi \in C^\infty_c(M)$ so that $\phi f = f$ on $M$. Then if $\psi \in C^\infty_c(M)$ is to that $\psi = 1$ on $\Omega$ we have for all $x \in \Omega$ that
\begin{equation}
\begin{split}
Nf(x)
&=
\int_{\R^n}
\psi(x)K(x,y)\det(g(y))^{\frac12}\phi(y)f(y)
\,\der y
\\
&=
\int_{\R^n}
\tilde K(x,y)f(y)
\,\der y.
\end{split}
\end{equation}
We let $\tilde N$ be the operator corresponding to the kernel $\tilde K$. Then $Nf(x) = \tilde Nf(x)$ on $\Omega$ which shows that it is enough to only consider operators with kernel of the form~\eqref{eqn:tilde-K}. For the details see the proof of Theorem~\ref{thm:elliptic-regularity} in section~\ref{sec:proofs-of-main-theorems}. From now on we let $N = \tilde N$ to avoid cluttered notation and we keep the cut-off functions $\psi$ and $\phi$ fixed for the remainder of this section.

We will prove that $N \in \Psi^{-1}(k-s,s-4)$ for all $s \in \N$ with $4 \le s \le k$. This is accomplished by studying the operator in the global coordinates of the Euclidean unit ball and by computing the symbol of the operator.
By \cite[Lemma 8.1.12]{PSUbook} we can write in the coordinates that
\begin{equation}
\label{eqn:kernel-of-N}
\tilde K(x,y)
=
\psi(x)
\frac{2a(x,y)\det(g(y))^{1/2}}{[G_{jk}(x,y)(x-y)^j(x-y)^k]^{\frac{n-1}{2}}}
\phi(y)
\end{equation}
for some functions~$G_{jk}$ with $G_{jk}(x,x) = g_{jk}(x)$.

\begin{lemma}
\label{lma:kernel-regularity}
Let $(M,g)$ be a simple manifold with $g \in C^k(M)$ for some $k \ge 3$. Then $\tilde K \in C^{k-2}(\R^n \times \R^n \setminus \Delta)$ where $\Delta \coloneqq \{(x,x)\,:\,x \in \R^n\}$ is the diagonal in $\R^n \times \R^n$.
\end{lemma}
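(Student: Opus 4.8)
The plan is to factor the kernel \eqref{eqn:kernel-of-N} and to determine the regularity of each factor separately, the decisive one being the Jacobian factor $a$. Away from the diagonal I would write
\[
\tilde K(x,y) = \psi(x)\,\phi(y)\,\det(g(y))^{\frac12}\cdot 2a(x,y)\cdot d_g(x,y)^{1-n},
\]
using that by \cite[Lemma 8.1.12]{PSUbook} the bracket in the denominator of \eqref{eqn:kernel-of-N} equals $d_g(x,y)^2$, so that $[G_{jk}(x,y)(x-y)^j(x-y)^k]^{(n-1)/2} = d_g(x,y)^{n-1}$ on all of $\R^n \times \R^n \setminus \Delta$; the delicate behaviour of the coefficients $G_{jk}$ near $\Delta$ is thus irrelevant off the diagonal. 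The benign factors are immediate: $\psi$ and $\phi$ are smooth by construction, and since $g \in C^k$ is positive definite, $\det(g(\,\cdot\,))$ is a strictly positive $C^k$ function, whence $\det(g(\,\cdot\,))^{1/2} \in C^k$.

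The main step is to show that $a \in C^{k-2}(M \times M)$. Since the Christoffel symbols $\Gamma^i_{jk}$ depend on one derivative of $g$, the geodesic vector field on $TM$ is $C^{k-1}$, and hence by the classical theorem on the regularity of flows its time-one map $E(x,v) = \exp_x(v)$ is $C^{k-1}$ jointly in $(x,v)$. The $v$-differential costs one further derivative, so $(x,v) \mapsto \der\exp_x|_v$ is $C^{k-2}$. Because a simple manifold has no conjugate points, $\der\exp_x$ is everywhere invertible; in particular $(x,v)\mapsto (x,\exp_x v)$ is a $C^{k-1}$ map with nonvanishing Jacobian, so by the inverse function theorem $(x,y)\mapsto \exp_x^{-1}(y)$ is $C^{k-1}$. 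Composing the $C^{k-2}$ map $\der\exp_x$ with the $C^{k-1}$ map $\exp_x^{-1}$, applying the (polynomial) determinant, and inverting — legitimate since $\det \der\exp_x$ never vanishes — yields $a \in C^{k-2}$.

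It remains to record that $d_g^{1-n}$ is $C^{k-1}$ away from $\Delta$. Indeed $d_g(x,y)^2 = g_{ij}(x)\,(\exp_x^{-1}(y))^i(\exp_x^{-1}(y))^j$ is a product of a $C^k$ and a $C^{k-1}$ function, hence $C^{k-1}$, and it is strictly positive off the diagonal; therefore its square root and all real powers, in particular $d_g^{1-n}$, are $C^{k-1}$ on $\R^n \times \R^n \setminus \Delta$. Combining the three factors, the product $\tilde K$ has regularity $C^{\min\{k,\,k-2,\,k-1\}} = C^{k-2}$ off the diagonal, as claimed; the hypothesis $k \ge 3$ guarantees that this is at least $C^1$.

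The main obstacle is the second step: one must track the exact derivative losses through the exponential map. The heart of the matter is that $a$ involves the first $v$-derivative of $\exp$, which is precisely where two derivatives are lost and which pins the regularity at $C^{k-2}$ rather than $C^{k-1}$. Everything hinges on the flow-regularity theorem giving $\exp$ the full $C^{k-1}$ and on the absence of conjugate points, which both keeps $\det \der\exp_x$ away from zero and makes the inverse function theorem available for $\exp_x^{-1}$.
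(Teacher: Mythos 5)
Your proposal is correct and follows essentially the same route as the paper: factor $\tilde K$ as $\psi(x)\,2a(x,y)\,d_g(x,y)^{1-n}\det(g(y))^{1/2}\phi(y)$, deduce $a \in C^{k-2}$ from the $C^{k-1}$ regularity of the geodesic flow (hence of $\exp$) with one more derivative lost in $\der\exp_x$, and observe $d_g \in C^{k-1}$ off the diagonal via $\exp_x^{-1}$. Your write-up simply makes explicit the invertibility and inverse-function-theorem steps that the paper leaves implicit.
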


\begin{proof}
The kernel $\tilde K$ can be expressed in the form
\begin{equation}
\label{eqn:kernel-regularity}
\tilde K(x,y)
=
\psi(x)
2a(x,y)
d_g(x,y)^{1-n}
\det(g(y))^{\frac12}
\phi(y).
\end{equation}
By standard ODE theory the geodesic flow has $C^{k-1}$ smooth initial value dependence when $g \in C^k(M)$, and thus the exponential function is also $C^k$. It follows that $a \in C^{k-2}(M \times M)$. In addition, since $d_g(x,\exp_x(v)) = \abs{v}_g$ for $(x,v) \in TM$ it follows that $d_g(x,y) \in C^{k-1}(M \times M \setminus \Delta)$. Finally, since the determinant term in~\eqref{eqn:kernel-regularity} is $C^k$ we see that $\tilde K$ is $C^{k-2}$ off diagonal as claimed.
\end{proof}

By denoting
\begin{equation}
\label{eqn:z-kernel}
k(x,z) \coloneqq \tilde K(x,x-z)
\end{equation}
and letting
\begin{equation}
\label{eqn:full-symbol}
a(x,\xi)
\coloneqq
\int_{\R^n}
e^{-iz \cdot \xi}
k(x,z)
\,\der z
\end{equation}
the normal operator on~$L^2(M)$ can be brought to the form
\begin{equation}
Nf(x)
=
\int_{R^n}
e^{ix \cdot \xi}
a(x,\xi)
\hat f(\xi)
\,\der \xi.
\end{equation}

The following lemma is a finite regularity adaptation of the classical result~\cite[Chapter VI.7.4]{Stein1993}.

\begin{lemma}
\label{lma:stein}
Let $m < 0$ and suppose that $\kappa \in C^l_c(\R^n \times (\R^n\setminus\{0\}))$ where $l \in \N$ satisfies estimates
\begin{equation}
\abs{\partial^\alpha_x\partial^\beta_z\kappa(x,z)}
\le
C_{\alpha\beta}
\abs{z}^{-m-n-\abs{\beta}},
\quad
z \ne 0,
\end{equation}
for $\abs{\alpha} + \abs{\beta} \le l$. Then the function on $\R^n \times \R^n$ defined by
\begin{equation}
b(x,\xi)
\coloneqq
\int_{\R^n}
e^{-iz \cdot \xi}
\kappa(x,z)
\,\der z
\end{equation}
is a symbol in the class $S^m(l-s,s-2)$ for all $s \in \N$ with $2 \le s \le l$.
\end{lemma}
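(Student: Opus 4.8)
The plan is to reduce the symbol estimates to a single scaling principle: the $z$-Fourier transform of a kernel obeying order-$m$ bounds is a symbol of order $m$. Differentiating under the integral sign (which is justified because each factor of $z$ improves integrability of the singularity at the origin), we have
\begin{equation}
\partial_x^\gamma\partial_\xi^\alpha b(x,\xi)
=
\int_{\R^n}
e^{-iz\cdot\xi}
(-iz)^\alpha\partial_x^\gamma\kappa(x,z)
\,\der z.
\end{equation}
Writing $\kappa_{\alpha,\gamma}(x,z)\coloneqq(-iz)^\alpha\partial_x^\gamma\kappa(x,z)$ and applying the Leibniz rule to the hypothesis shows that $\abs{\partial_z^\beta\kappa_{\alpha,\gamma}(x,z)}\le C\abs{z}^{-m'-n-\abs{\beta}}$ with $m'\coloneqq m-\abs{\alpha}<0$, valid for $\abs{\beta}+\abs{\gamma}\le l$. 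Thus it suffices to show that such a kernel has $z$-Fourier transform bounded by $C(1+\abs{\xi})^{m'}$, and then read off the two defining estimates of the class with $\rho=1$, $\delta=0$.

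For the pointwise decay I would use a dyadic decomposition $\kappa_{\alpha,\gamma}=\sum_{j\ge 0}\phi_j\kappa_{\alpha,\gamma}$ in $z$, where $\phi_j$ is supported in the annulus $\abs{z}\sim 2^{-j}$, together with a harmless remainder supported in $\abs{z}\gtrsim 1$. On the $j$-th annulus two bounds are available for the corresponding piece $b_j$ of the transform: a trivial one $\abs{b_j}\le C\,2^{jm'}$ from the size of $\kappa_{\alpha,\gamma}$ and the measure $2^{-jn}$ of the support, and an integration-by-parts bound $\abs{b_j}\le C\,\abs{\xi}^{-2N}2^{j(m'+2N)}$ obtained by moving $(-\Delta_z)^N$ off $e^{-iz\cdot\xi}$ (legitimate since each piece lives in an annulus away from the origin). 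Splitting the sum at $2^{j}\sim\abs{\xi}$ and using the trivial bound for $2^{j}\gtrsim\abs{\xi}$ and the integration-by-parts bound with any $N$ satisfying $2N>\abs{m'}$ for $2^{j}\lesssim\abs{\xi}$, both geometric series are dominated by their endpoint $2^{j}\sim\abs{\xi}$ and sum to $C\abs{\xi}^{m'}$. The remainder piece is $C^l_c$ in $z$ and so contributes a term decaying like $(1+\abs{\xi})^{-l}$, which is lower order, while boundedness for small $\abs{\xi}$ follows from the integrability of $\kappa_{\alpha,\gamma}(x,\cdot)$.

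It remains to record the derivative budget, which is where the precise pairing $(r,L)=(l-s,s-2)$ is forced. Estimating $\partial_x^\gamma\partial_\xi^\alpha b$ as above uses $\abs{\gamma}$ derivatives in $x$ and $2N$ derivatives in $z$ applied to $\kappa$, hence requires $2N+\abs{\gamma}\le l$, and the summation demands $2N>\abs{m}+\abs{\alpha}$. With $\abs{\alpha}\le s-2$ and $\abs{\gamma}\le l-s$ one has $\abs{\alpha}+\abs{\gamma}\le l-2$, so an admissible integer $N$ with $\abs{m}+\abs{\alpha}<2N\le l-\abs{\gamma}$ exists in the relevant range (in particular for the order $m=-1$ used in the application, where a single Laplacian already suffices). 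This yields honest bounds $\abs{\partial_x^\gamma\partial_\xi^\alpha b(x,\xi)}\le C(1+\abs{\xi})^{m-\abs{\alpha}}$ for all $\abs{\gamma}\le l-s$ and $\abs{\alpha}\le s-2$; the Zygmund estimate then follows from the embedding $C^{l-s}\hookrightarrow C^{l-s}_\ast$, and the first symbol estimate is the $\gamma=0$ case. Therefore $b\in S^m(l-s,s-2)$ for every integer $2\le s\le l$.

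The main obstacle is precisely this bookkeeping: one must choose the integration-by-parts order $N$ large enough to beat the singularity exponent in the dyadic summation, yet small enough to respect the finite regularity of $\kappa$ after $\abs{\gamma}$ of its derivatives have been spent in $x$. Matching these two competing constraints is what pins down the admissible window for $(\abs{\alpha},\abs{\gamma})$ and hence the pair $(l-s,s-2)$; the only further care needed is the uniform Zygmund control of the top-order $x$-derivative, which here is subsumed by the genuine $C^{l-s}$ bounds produced by the same argument.
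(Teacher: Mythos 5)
Your argument is, at bottom, the same as the paper's: the paper simply invokes the kernel-to-symbol estimate of Stein (VI~4.5, as used in VI~7.4) for the $\xi$-derivative bounds, applies it again to $\partial_x^\gamma\kappa$ for the $x$-derivatives after differentiating under the integral, and passes from $C^{l-s}$ to $C^{l-s}_\ast$ by the trivial embedding. What you have done is inline the proof of that cited estimate via the standard dyadic decomposition in $z$; your reduction to the kernels $\kappa_{\alpha,\gamma}$ of order $m'=m-\abs{\alpha}$, the two competing bounds on each annulus, and the final Zygmund step all reproduce exactly what the citation delivers, so this is a more self-contained but not essentially different route.

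The one place your explicit bookkeeping does not close as written is the choice of integration-by-parts operator. Using $(-\Delta_z)^N$ forces an \emph{even} number $2N$ of $z$-derivatives, and at the top of the admissible range ($\abs{\alpha}=s-2$, $\abs{\gamma}=l-s$, $m=-1$) your window $\abs{m}+\abs{\alpha}<2N\le l-\abs{\gamma}$ becomes $s-1<2N\le s$, which contains an even integer only when $s$ is even; the parenthetical ``a single Laplacian already suffices'' only covers $\alpha=0$. The standard fix is to integrate by parts with the first-order operator $L=\abs{\xi}^{-2}\,i\,\xi\cdot\nabla_z$, which satisfies $Le^{-iz\cdot\xi}=e^{-iz\cdot\xi}$ and gains one power of $\abs{\xi}^{-1}$ per $z$-derivative spent; then any integer $K$ with $\abs{m}+\abs{\alpha}<K\le l-\abs{\gamma}$ works, and for $m=-1$ the choice $K=\abs{\alpha}+2\le s\le l-\abs{\gamma}$ is available throughout the claimed range. (For the $m=-2$ instance to which the paper later applies this lemma the same counting is tight at the extreme indices; that looseness is inherited from the appeal to Stein and is not specific to your write-up.) With that replacement your proof is correct.
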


\begin{proof}
Since by assumption
\begin{equation}
\abs{\partial_z^\beta\kappa(x,z)} \le C_\beta\abs{z}^{-m-n-\abs{\beta}}, \quad z \ne 0,
\end{equation}
holds for all $\abs{\beta} \le l$ and since~$\kappa$ is compactly supported, it can be shown by using~\cite[VI 4.5.]{Stein1993} as in~\cite[VI 7.4.]{Stein1993} that~$b$ is a continuous function on $\R^n \times \R^n$ and
\begin{equation}
\label{eqn:first-symbol-estimate}
\abs{\partial_\xi^\beta b(x,\xi)}
\le
C_\beta(1+\abs{\xi})^{m-\abs{\beta}}
\end{equation}
for all $\abs{\beta} \le l-2$, which is the first estimate we set out to prove.

Then let $s \in [2,l]$ be an integer. Since~$\kappa$ is compactly supported we have
\begin{equation}
\partial_x^\alpha b(x,\xi)
=
\int_{\R^n}
e^{-iz \cdot \xi}
\partial_x^\alpha
\kappa(x,z)
\,\der z.
\end{equation}
Let us denote $\kappa_\alpha(x,z) = \partial_x^\alpha\kappa(x,z)$. Then it holds that
\begin{equation}
\abs{\partial_z^\beta\kappa_\alpha(x,z)}
\le
C_{\alpha\beta}
\abs{z}^{-m-n-\abs{\beta}},
\quad
z\ne 0,
\end{equation}
for all $\abs{\beta} \le l - \abs{\alpha}$. Therefore by a similar application of~\cite[VI 4.5]{Stein1993} we have
\begin{equation}
\abs{\partial_\xi^\beta\partial_x^\alpha b(x,\xi)}
\le
C_{\alpha\beta}(1+\abs{\xi})^{m-\abs{\beta}}
\end{equation}
for all $\abs{\alpha} + \abs{\beta} \le l - 2$. Then it follows that
\begin{equation}
\label{eqn:second-symbol-estimate}
\norm{\partial_\xi^\beta b(\,\cdot\,,\xi)}_{C^{l-s}_\ast}
\le
\norm{\partial_\xi^\beta b(\,\cdot\,,\xi)}_{C^{l-s}}
\le
C_{\alpha\beta}(1+\abs{\xi})^{m-\abs{\beta}},
\end{equation}
which uses compactness of the support of~$\kappa$ again. By estimates~\eqref{eqn:first-symbol-estimate} and~\eqref{eqn:second-symbol-estimate} we have shown $b \in S^{m}(l-s,s-2)$ for all integers $s \in [2,l]$ as claimed.
\end{proof}

\begin{lemma}
\label{lma:full-symbol}
Let $(M,g)$ be a simple manifold with $g \in C^k(M)$ for some $k \ge 5$. Then the function~$a$ defined by~\eqref{eqn:full-symbol} belongs to $S^{-1}(k-s,s-4)$ for all $s \in [4,k]$ with $4 \le s \le k$.
\end{lemma}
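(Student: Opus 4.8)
The plan is to recognise the symbol $a$ of~\eqref{eqn:full-symbol} as the output of the Stein-type Lemma~\ref{lma:stein} applied to the kernel $k$ of~\eqref{eqn:z-kernel}, with order $m = -1$ and regularity index $l = k-2$. Indeed $a(x,\xi) = \int_{\R^n} e^{-iz\cdot\xi} k(x,z)\,\der z$ is precisely the function $b$ produced by that lemma for $\kappa = k$. With $m=-1$ and $l=k-2$, Lemma~\ref{lma:stein} delivers $a \in S^{-1}((k-2)-s', s'-2)$ for every integer $s' \in [2,k-2]$, and substituting $s' = s-2$ turns this into $a \in S^{-1}(k-s, s-4)$ for every integer $s\in[4,k]$, exactly the asserted range. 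So the whole lemma reduces to checking the two hypotheses of Lemma~\ref{lma:stein}: that $k \in C^{k-2}_c(\R^n \times (\R^n \setminus \{0\}))$ and that $\abs{\partial_x^\alpha\partial_z^\beta k(x,z)} \le C_{\alpha\beta}\abs{z}^{1-n-\abs{\beta}}$ for $\abs{\alpha}+\abs{\beta}\le k-2$.

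The first hypothesis I would dispatch immediately. By~\eqref{eqn:z-kernel} we have $k(x,z) = \tilde K(x,x-z)$, and $(x,z)\mapsto(x,x-z)$ is a smooth diffeomorphism carrying $\{z=0\}$ onto the diagonal $\Delta$, so Lemma~\ref{lma:kernel-regularity} gives $k \in C^{k-2}$ on $\{z\neq 0\}$. Compact support comes from the cut-offs: $\psi(x)$ confines $x$ to a compact set and $\phi(x-z)$ confines $x-z$ to a compact set, whence $z = x-(x-z)$ ranges over a compact set, and the only singularity sits at $z=0$.

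The core of the proof, and the step I expect to be the main obstacle, is the differential inequality. Using~\eqref{eqn:kernel-of-N} I would split
\begin{equation}
k(x,z) = A(x,z)\, Q(x,z)^{-\frac{n-1}{2}},
\end{equation}
where $A(x,z) = \psi(x)\,2a(x,x-z)\det(g(x-z))^{\frac12}\phi(x-z)$ is the regular amplitude and $Q(x,z) = G_{jk}(x,x-z)z^jz^k$ is the quadratic form of~\eqref{eqn:kernel-of-N}. The amplitude $A$ is $C^{k-2}$, compactly supported, and bounded with all derivatives of order $\le k-2$. For the singular factor the essential inputs are the nondegeneracy $c\abs{z}^2 \le Q(x,z) \le C\abs{z}^2$ on the support (using $G_{jk}(x,x)=g_{jk}(x)$ and positivity of $g$) together with the bounds $\abs{\partial_x^\alpha\partial_z^\beta Q(x,z)} \le C\abs{z}^{\max(2-\abs{\beta},0)}$, which hold because $Q$ is quadratic in $z$ with bounded, sufficiently differentiable coefficients $G_{jk}(x,x-z)$. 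Applying the Fa\`a di Bruno formula to $t\mapsto t^{-(n-1)/2}$ composed with $Q$, bounding the blocks $\partial^{\beta_i}Q$ as above and using $\abs{(\partial_t^p t^{-(n-1)/2})(Q)} \le C\abs{z}^{-(n-1)-2p}$, a homogeneity count over each partition of $\beta$ into $p$ blocks of which $q$ are first order yields $\abs{\partial_x^\alpha\partial_z^\beta Q^{-(n-1)/2}} \le C\abs{z}^{-(n-1)-\abs{\beta}}$. The Leibniz rule then combines this with the bounded derivatives of $A$ to produce $\abs{\partial_x^\alpha\partial_z^\beta k(x,z)} \le C_{\alpha\beta}\abs{z}^{1-n-\abs{\beta}}$ for $\abs{\alpha}+\abs{\beta}\le k-2$, as required.

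Once both hypotheses hold with $m=-1$ and $l=k-2$, I would conclude by invoking Lemma~\ref{lma:stein} and performing the reindexing $s'=s-2$ described above. The only delicate point is the homogeneity bookkeeping in the Fa\`a di Bruno step: one must verify that the loss of $\abs{z}$-powers from differentiating $Q$ is always compensated by the corresponding derivative of $t^{-(n-1)/2}$, which amounts to the combinatorial inequality $q \ge 2p - \abs{\beta}$ valid for every partition of $\beta$.
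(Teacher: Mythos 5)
Your proposal is correct and follows essentially the same route as the paper: both reduce the lemma to Lemma~\ref{lma:stein} by verifying the kernel estimate $\abs{\partial_x^\alpha\partial_z^\beta k(x,z)}\le C_{\alpha\beta}\abs{z}^{1-n-\abs{\beta}}$ and then reindexing $s'=s-2$, the only difference being that the paper factors $k=\abs{z}^{1-n}k_0$ with $k_0$ a function of $(x,x-z,z/\abs{z})$ and counts homogeneity, whereas you keep the quadratic form $Q$ and run Fa\`a di Bruno on $Q^{-(n-1)/2}$ --- the same homogeneity count in different clothing. Your bookkeeping (estimates up to order $k-2$, i.e.\ $l=k-2$ in Lemma~\ref{lma:stein}) is in fact what is needed to reach the full claimed range $4\le s\le k$; the paper's proof asserts the kernel estimate only for $\abs{\alpha}+\abs{\beta}\le k-4$, which appears to be a slip.
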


\begin{proof}
We write the kernel in~\eqref{eqn:z-kernel} in the form
\begin{equation}
\begin{split}
k(x,z)
=
\abs{z}^{1-n}
\psi(x)
\frac{2a(x,x-z)\det(g(x-z))^{\frac12}}{[G_{jk}(x,x-z)\frac{z^j}{\abs{z}}\frac{z^k}{\abs{z}}]^{\frac{n-1}{2}}}
\phi(x-z)
\end{split}
\end{equation}
and denote
\begin{equation}
k_0(x,z)
=
\psi(x)
\frac{2a(x,x-z)\det(g(x-z))^{\frac12}}{[G_{jk}(x,x-z)\frac{z^j}{\abs{z}}\frac{z^k}{\abs{z}}]^{\frac{n-1}{2}}}
\phi(x-z).
\end{equation}
Then $k_0(x,z)$ is $C^{k-2}$ for $z \ne 0$ and its derivatives $\partial_x^\alpha\partial_z^\beta k_0(x,z)$, $z \ne 0$, are bounded for all $\abs{\alpha} + \abs{\beta} \le k-2$ since $k_0(x,z)$ is compactly supported. Thus~$k$ satisfies estimates
\begin{equation}
\abs{\partial_{x}^\alpha\partial_z^\beta k(x,z)}
\le
C_{\alpha\beta}
\abs{z}^{1-n-\abs{\beta}},
\quad
z \ne 0,
\end{equation}
for all $\abs{\alpha} + \abs{\beta} \le k-4$ and the claim follows from lemma~\ref{lma:stein}.
\end{proof}

\begin{remark}
The symbol $a(x,\xi)$ is smooth in~$\xi$ but our argument does not prove that~$a(x,\xi)$ satisfies the estimates of the class~$S^m(k,L)$ for all orders~$L$ of $\xi$-derivatives. Thus we cannot use paradifferential calculus to study~$N$.
\end{remark}

Lemma~\ref{lma:full-symbol} shows that $N \in \Psi^{-1}(k-s,s-4)$ for all $s \in \N$ with $4 \le s \le k$ when the Riemannian metric is in~$C^k(M)$ when $k \ge 5$. The rest of this section is devoted to computing the principal symbol of the normal operator. We start by writing the kernel~$k$ as 
\begin{equation}
k(x,z)
=
\abs{z}^{1-n}
h
\left(
x,z,\frac{z}{\abs{z}}
\right)
\end{equation}
where~$h$ is a function on $\R^n \times [0,\infty) \times S^{n-1}$ defined by
\begin{equation}
h(x,r,\omega)
=
\psi(x)
\frac{2a(x,x-r\omega)\det(g(x-r\omega))^{\frac12}}{[G_{jk}(x,x-r\omega)\omega^j\omega^k]^{\frac{n-1}{2}}}
\phi(x-r\omega).
\end{equation}
Since $G_{jk}(x,x-r\omega)\omega^j\omega^k$ is non-vanishing we see that $h \in C^{k-2}(\R^n \times [0,\infty) \times S^{n-1})$. By the Fundamental theorem of calculus
\begin{equation}
h(x,r,\omega)
=
h(x,0,\omega)
+
r
\int_0^1
\partial_rh(x,rt,\omega)
\,\der t
\end{equation}
and we can decompose $k(x,z) = k_{-1}(x,z) + r(x,z)$ where
\begin{equation}
k_{-1}(x,z)
\coloneqq
\abs{z}^{1-n}
h
\left(
x,0,\frac{z}{\abs{z}}
\right)
\end{equation}
and
\begin{equation}
r(x,z)
\coloneqq
\abs{z}^{2-n}
\int_0^1
\partial_r
h
\left(
x,\abs{z}t,\frac{z}{\abs{z}}
\right)
\,\der t.
\end{equation}
Since $k(x,z)$ is compactly supported in~$z$ we can choose a cut-off function~$\chi(z)$ so that $0 \le \chi \le 1$ and $\chi = 1$ near the origin so that
\begin{equation}
k(x,z)
=
\chi(z)
k(x,z)
=
\chi(z)
k_{-1}(x,z)
+
\chi(z)
r(x,z).
\end{equation}
Now the full symbol of~$N$ is decomposed as
\begin{equation}
\begin{split}
a(x,\xi)
&=
\mathcal{F}(\chi(\,\cdot\,)k_{-1}(x,\,\cdot\,))(\xi)
+
\mathcal{F}(\chi(\,\cdot\,)r(x,\,\cdot\,))(\xi)
\\
&\eqqcolon
a_{-1}(x,\xi)
+
c(x,\xi).
\end{split}
\end{equation}

\begin{lemma}
\label{lma:principal-part}
Let $(M,g)$ be a simple manifold with $g \in C^k(M)$ for some $k \ge 5$. Then $a_{-1} \in S^{-1}(k-s,s-4)$ for all $s \in \N$ with $4 \le s \le k$ and~$c \in S^{-2}(k-s,s-5)$ for all $s \in \N$ with $5 \le s \le k$.
\end{lemma}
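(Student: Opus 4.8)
The plan is to apply Lemma~\ref{lma:stein} to each of the two pieces of the cut-off kernel separately, verifying in each case the pointwise derivative bounds its hypothesis requires and then re-indexing the resulting symbol class. Since $h \in C^{k-2}(\R^n \times [0,\infty) \times S^{n-1})$ and the maps $z \mapsto \abs{z}$ and $z \mapsto z/\abs{z}$ are smooth on $\R^n \setminus \{0\}$, both $\chi k_{-1}$ and $\chi r$ lie in $C^l_c(\R^n \times (\R^n \setminus \{0\}))$ for the appropriate $l$, the compact support coming from $\chi$ in the $z$-variable and from the cut-offs $\psi,\phi$ built into $h$ in the $x$-variable.

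For the principal part $a_{-1} = \mathcal{F}(\chi(\,\cdot\,) k_{-1}(x,\,\cdot\,))$ I would take $m = -1$ and $l = k-2$. The decisive estimate is
\[
\abs{\partial_x^\alpha \partial_z^\beta (\chi k_{-1})(x,z)}
\le
C_{\alpha\beta} \abs{z}^{1-n-\abs{\beta}},
\qquad z \ne 0,
\]
for $\abs{\alpha} + \abs{\beta} \le k-2$. This is the cleanest case: because $h(x,0,z/\abs{z})$ has no radial dependence, $k_{-1}$ is genuinely homogeneous of degree $1-n$ in $z$, so each $z$-derivative lowers the homogeneity by exactly one; the $x$-derivatives act only on the $C^{k-2}$ factor $h$, and the derivatives of $\chi$ are supported away from the origin, where $\abs{z}^{1-n-\abs{\beta}}$ is harmless. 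With this bound, Lemma~\ref{lma:stein} gives $a_{-1} \in S^{-1}((k-2)-\sigma, \sigma - 2)$ for $2 \le \sigma \le k-2$, and the substitution $\sigma = s-2$ yields $a_{-1} \in S^{-1}(k-s, s-4)$ for $4 \le s \le k$.

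For the remainder $c = \mathcal{F}(\chi(\,\cdot\,) r(x,\,\cdot\,))$ I would write $r(x,z) = \abs{z}^{2-n} R(x, \abs{z}, z/\abs{z})$ with $R(x,\rho,\omega) = \int_0^1 \partial_r h(x, \rho t, \omega)\,\der t$, and take $m = -2$, $l = k-3$. Differentiating under the integral sign (legitimate on the compact interval $t \in [0,1]$) shows $R \in C^{k-3}$, since each radial derivative consumes one order of $h$. The crux, and what I expect to be the main technical point, is the bound
\[
\abs{\partial_x^\alpha \partial_z^\beta (\chi r)(x,z)}
\le
C_{\alpha\beta}\abs{z}^{2-n-\abs{\beta}},
\qquad z \ne 0,
\]
for $\abs{\alpha}+\abs{\beta} \le k-3$. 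Here $R$ depends on $z$ through both the radial slot $\abs{z}$ and the angular slot $z/\abs{z}$, so one must check that no $z$-derivative degrades the decay. The bookkeeping is that $\partial_{z_j}\abs{z} = z_j/\abs{z}$ is bounded, so differentiating the radial argument costs nothing, whereas each derivative of the prefactor $\abs{z}^{2-n}$ or of the angular argument $z/\abs{z}$ produces a factor homogeneous of degree $-1$, which accounts for the $\abs{z}^{-\abs{\beta}}$ loss while the $C^{k-3}$-regularity of $R$ keeps all its derivatives bounded. Lemma~\ref{lma:stein} then gives $c \in S^{-2}((k-3)-\sigma, \sigma-2)$ for $2 \le \sigma \le k-3$, and $\sigma = s-3$ produces $c \in S^{-2}(k-s, s-5)$ for $5 \le s \le k$, completing the proof.
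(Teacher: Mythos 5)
Your proposal is correct and follows essentially the same route as the paper: verify the pointwise kernel estimates $\abs{\partial_x^\alpha\partial_z^\beta(\chi k_{-1})}\le C\abs{z}^{1-n-\abs{\beta}}$ for $\abs{\alpha}+\abs{\beta}\le k-2$ and $\abs{\partial_x^\alpha\partial_z^\beta(\chi r)}\le C\abs{z}^{2-n-\abs{\beta}}$ for $\abs{\alpha}+\abs{\beta}\le k-3$ via the chain rule applied to $h$ (the paper iterates the one-derivative estimate on $\partial_r h(x,\abs{z}t,z/\abs{z})$ rather than naming $R$, but the computation is identical), and then invoke Lemma~\ref{lma:stein} with the same reindexing $\sigma=s-2$ and $\sigma=s-3$.
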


\begin{proof}
Since $h \in C^{k-2}(\R^n \times [0,\infty) \times S^{n-1})$ is compactly supported in~$x$ and~$r$ and~$S^{n-1}$ is compact, we can extend~$h$ to a compactly supported function on $\R^n \times [0,\infty) \times S^{n-1}$. Thus $\partial^\alpha_x\partial^l_r\partial^\beta_\omega h(x,r,\omega)$ is continuous and compactly supported for all $\alpha \in \N^n$, $\beta \in \N^{n-1}$ and $l \in \N$ for which we have $\abs{\alpha} + l + \abs{\beta} \le k-2$.

First, we prove the claim about the Fourier transform~$c$ of the remainder. 
Since derivatives of~$h$ are continuous and compactly supported, a simple computation using the chain rule shows that
\begin{equation}
\begin{split}
\abs{
\partial_x^\alpha
\partial_{z^j}
\partial_rh
\left(
x,\abs{z}t,\frac{z}{\abs{z}}
\right)
}
&\le
C\abs{z}^{-1}
\end{split}
\end{equation}
near $z = 0$ and for all $t \in [0,1]$ when $\abs{\alpha} + 2 \le k-2$. Therefore by iteration
\begin{equation}
\abs{
\partial_x^\alpha\partial_z^\beta
\partial_rh
\left(
x,\abs{z}t,\frac{z}{\abs{z}}
\right)
}
\le
C_{\alpha\beta}
\abs{z}^{-\abs{\beta}}
\end{equation}
near $z = 0$ when $\abs{\alpha} + \abs{\beta} + 1 \le k-2$. The above estimate applied to the remainder term $r(x,z)$ yields
\begin{equation}
\abs{
\partial_x^\alpha
\partial_z^\beta
\int_0^1
\partial_r
h
\left(
x,\abs{z}t,\frac{z}{\abs{z}}
\right)
\,\der t
}
\le
C_{\alpha\beta}
\abs{z}^{-\abs{\beta}}
\end{equation}
near $z = 0$ when $\abs{\alpha} + \abs{\beta} + 1 \le k-2$, which implies that
\begin{equation}
\abs{
\partial_x^\alpha
\partial_z^\beta
(\chi(z)r(x,z))
}
\le
C_{\alpha\beta}
\abs{z}^{2-n-\abs{\beta}}
\end{equation}
for all~$z$ and $\abs{\alpha} + \abs{\beta} \le k-3$ since the cut-off~$\chi(z)$ implies that only have to derive the estimate near $z = 0$. It follows from lemma~\ref{lma:stein} that $c \in S^{-2}(k-s,s-5)$ for all $s \in \N$ with $5 \le s \le k$.

By a similar computation we see that~$k_{-1}$ satisfies estimates
\begin{equation}
\abs{
\partial_x^\alpha
\partial_z^\beta
(\chi(z)k_{-1}(x,z))
}
\le
C_{\alpha\beta}
\abs{z}^{1-n-\abs{\beta}}
\end{equation}
for all~$z$ and $\abs{\alpha} + \abs{\beta} \le k-2$.
Thus, again, by lemma~\ref{lma:stein} we have $a_{-1} \in S^{-1}(k-s,s-4)$ for all $s \in \N$ with $4 \le s \le k$, which finishes the proof.
\end{proof}

In the next section we construct a leading order parametrix for~$N$. To this end we need to find a more explicit representation for~$a_{-1}$. We write $\chi(z)k_{-1}(x,z) = k_{-1}(x,z) - (1-\chi(z))k_{-1}(x,z)$ and analyze the Fourier transforms of the parts separately.

\begin{lemma}
\label{lma:singularity}
For a dimensional constant $C$ it holds that
\begin{equation}
\int_{\R^n}
e^{-iz \cdot \xi}
k_{-1}(x,z)
\,\der z
=
C
\psi(x)
\abs{\xi}^{-1}_{g(x)}
\phi(x).
\end{equation}
\end{lemma}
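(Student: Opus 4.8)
The plan is to reduce the defining Fourier integral to the classical formula for the Fourier transform of a radial homogeneous distribution, after first making the diagonal value of $k_{-1}$ completely explicit. First I would evaluate $h(x,0,\omega)$. By the definition $a(x,y) = \det(\der\exp_x|_{\exp_x^{-1}(y)})^{-1}$ and the standard fact that $\der\exp_x|_0 = \Id$ we have $a(x,x) = 1$, and we recall $G_{jk}(x,x) = g_{jk}(x)$. Hence
\[
h(x,0,\omega)
=
2\psi(x)\phi(x)
\frac{\det(g(x))^{1/2}}{[g_{jk}(x)\omega^j\omega^k]^{(n-1)/2}}.
\]
Writing $\omega = z/\abs{z}$ and using $g_{jk}(x)\omega^j\omega^k = \abs{z}^{-2}\, z^\top g(x) z$, the prefactor $\abs{z}^{1-n}$ in $k_{-1}$ cancels exactly against the $\abs{z}^{n-1}$ coming from the denominator, leaving the $x$-independent $z$-profile
\[
k_{-1}(x,z)
=
2\psi(x)\phi(x)\det(g(x))^{1/2}\,\bigl(z^\top g(x) z\bigr)^{-(n-1)/2}.
\]
Thus $k_{-1}(x,\cdot)$ is, up to the $x$-dependent constant $2\psi(x)\phi(x)\det(g(x))^{1/2}$, the homogeneous function $(z^\top g(x) z)^{-(n-1)/2}$ of degree $1-n$.

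Next I would diagonalize the quadratic form by the linear substitution $w = g(x)^{1/2}z$, which is admissible since $g(x)$ is symmetric positive definite. Then $z^\top g(x) z = \abs{w}^2$, $\der z = \det(g(x))^{-1/2}\,\der w$, and $z\cdot\xi = w\cdot\eta$ with $\eta \coloneqq g(x)^{-1/2}\xi$. The two factors $\det(g(x))^{\pm1/2}$ cancel, and the integral collapses to a dimensional constant times the Euclidean model integral
\[
\int_{\R^n}
e^{-iw\cdot\eta}
\abs{w}^{1-n}
\,\der w.
\]
I would then invoke the classical identity for the Fourier transform of the radial homogeneous distribution $\abs{w}^{1-n}$: for $n \ge 2$ the exponent $n-1$ lies strictly in $(0,n)$, so this is the Riesz-type transform pair and the (distributional) Fourier transform equals $c_n\abs{\eta}^{-1}$ for a dimensional constant $c_n$. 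Finally, undoing the substitution and using $\abs{\eta} = \abs{g(x)^{-1/2}\xi} = (g^{jk}(x)\xi_j\xi_k)^{1/2} = \abs{\xi}_{g(x)}$ gathers all $x$-dependence into $\psi(x)\phi(x)$ and produces $C\psi(x)\abs{\xi}^{-1}_{g(x)}\phi(x)$, with $C$ absorbing $2c_n$ together with the chosen Fourier normalization.

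The one point that genuinely requires care, rather than routine bookkeeping, is that the integral does \emph{not} converge absolutely: since $k_{-1}(x,\cdot)$ is homogeneous of degree $1-n$ it does not decay at infinity, so the Fourier transform must be understood in the sense of tempered distributions. This is precisely the setting of classical homogeneous-distribution theory, where the condition $0 < n-1 < n$ guarantees both local integrability at the origin and a well-defined homogeneous distributional Fourier transform $c_n\abs{\eta}^{-1}$ of degree $-1$. Because the statement asserts equality only up to the dimensional constant $C$, I expect to need no explicit Gamma-function evaluation of $c_n$ and no tracking of $2\pi$ factors, which keeps the argument short once the homogeneity structure and the cancellation of $\abs{z}^{1-n}$ have been exposed.
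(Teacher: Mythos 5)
Your argument is correct and is essentially the same computation the paper relies on: the paper's proof simply identifies $k_{-1}(x,z)=2\psi(x)\phi(x)\det(g(x))^{1/2}(g_{jk}(x)z^jz^k)^{-(n-1)/2}$ and cites \cite[Chapter 8.1]{PSUbook}, where precisely your reduction (diagonalizing the quadratic form and applying the distributional Fourier transform of the homogeneous function $\abs{w}^{1-n}$) is carried out. You have written out in full what the paper delegates to the reference, including the correct caveat that the transform is taken in the sense of tempered distributions.
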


\begin{proof}
The Fourier transform of
\begin{equation}
k_{-1}(x,z)
=
\psi(x)
\frac{2\det(g(x))^{1/2}}{(g_{jk}(x)z^jz^k)^{\frac{n-1}{2}}} 
\phi(x)
\end{equation}
in~$z$ is computed in~\cite[Chapter 8.1]{PSUbook}. The only difference is regularity in~$x$, which does not affect the computation.
\end{proof}

\begin{lemma}
\label{lma:second-part-of-the-psymbol}
Let $(M,g)$ be a simple manifold with $g \in C^k(M)$ for some $k \ge 3$. Let $b(x,\xi) \coloneqq \mathcal{F}((1-\chi(\,\cdot\,))k_{-1}(x,\,\cdot\,))(\xi)$. Then $b \in C^{k-2}_xC^\infty_\xi(\R^n \times (\R^n \setminus \{0\}))$. Moreover,~$b$ has a singularity of type $\abs{\xi}_g^{-1}$ at the origin, and  satisfies $\abs{b(x,\xi)} \le C\abs{\xi}^{2-k}$ when~$\abs{\xi}$ is large enough.
\end{lemma}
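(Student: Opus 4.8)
The plan is to read all three assertions off the decomposition set up just before the statement. Since $\chi k_{-1} = k_{-1} - (1-\chi)k_{-1}$, the function $b$ is
\begin{equation}
b(x,\xi)
=
\mathcal{F}(k_{-1}(x,\,\cdot\,))(\xi)
-
\mathcal{F}(\chi(\,\cdot\,)k_{-1}(x,\,\cdot\,))(\xi)
=
\mathcal{F}(k_{-1}(x,\,\cdot\,))(\xi)
-
a_{-1}(x,\xi),
\end{equation}
the difference between the Fourier transform of the genuinely homogeneous kernel $k_{-1}$ and the symbol $a_{-1}$ coming from its compactly supported truncation. The first term is computed exactly in lemma~\ref{lma:singularity}, and the whole point is that the difference is far more regular than either term separately.

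For the smoothness in $\xi$ and the nature of the singularity at the origin I would argue as follows. For each fixed $x$ the truncated kernel $\chi(\,\cdot\,)k_{-1}(x,\,\cdot\,)$ is a compactly supported distribution; it is in fact $L^1$ because the singularity $\abs{z}^{1-n}$ is locally integrable. Hence, by the Paley--Wiener--Schwartz theorem, its Fourier transform $a_{-1}(x,\,\cdot\,)$ is real-analytic in $\xi$ on all of $\R^n$, in particular $C^\infty$ in $\xi$ across the origin. By lemma~\ref{lma:singularity} the remaining term equals $C\psi(x)\abs{\xi}^{-1}_{g(x)}\phi(x)$, which is smooth in $\xi$ for $\xi \ne 0$ and carries precisely a singularity of type $\abs{\xi}^{-1}_g$ at $\xi = 0$. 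Subtracting the analytic contribution therefore leaves $b$ smooth in $\xi$ away from the origin and shows that its only singularity there is the claimed one. The regularity $C^{k-2}$ in $x$ I would obtain by differentiating this representation under the integral: the homogeneous term is $C^{k-2}$ in $x$ since $g \in C^k$, and for $a_{-1}$ each $x$-derivative falls on $g$ and on the angular profile $h(x,0,\,\cdot\,)\in C^{k-2}$ without changing the order $\abs{z}^{1-n}$ of the $z$-singularity, so the differentiated integrands remain integrable and depend continuously on $(x,\xi)$.

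The decay at infinity is the main estimate and the place where the finite number of derivatives enters. Here I would work with $(1-\chi)k_{-1}$ directly. Away from a fixed ball it coincides with the homogeneous kernel, so $\partial_z^\beta[(1-\chi)k_{-1}](x,z) = O(\abs{z}^{1-n-\abs{\beta}})$, and these derivatives are integrable in $z$ once $\abs{\beta} \ge 2$. Using $\widehat{\partial_z^\beta f} = (i\xi)^\beta \hat f$ to transfer $k-2$ derivatives onto the amplitude gives, for the multi-indices with $\abs{\beta} = k-2$,
\begin{equation}
\abs{\xi}^{k-2}\abs{b(x,\xi)}
\le
C\sum_{\abs{\beta} = k-2}
\norm{\partial_z^\beta[(1-\chi)k_{-1}](x,\,\cdot\,)}_{L^1(\R^n)}
<
\infty,
\end{equation}
whence $\abs{b(x,\xi)} \le C\abs{\xi}^{2-k}$ for $\abs{\xi}$ large, uniformly in the compactly supported variable $x$.

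The main obstacle throughout is that $k_{-1}$ is not integrable at infinity — it decays only like $\abs{z}^{1-n}$ — so $\mathcal{F}(k_{-1})$ exists only as a tempered distribution and the defining integral for $b$ is at best conditionally convergent. The resolution is exactly the splitting above: the non-integrable, scale-invariant part is handled distributionally by lemma~\ref{lma:singularity}, while the remainder is either compactly supported (hence analytic after Fourier transform) or has enough decay for the derivative transfer to be legitimate. Since that transfer is licit only for derivatives of order at least two, the borderline case $k = 3$ must be treated by hand; there the claimed bound $\abs{b(x,\xi)} \le C\abs{\xi}^{-1}$ is immediate from the two order~$-1$ bounds on $C\psi\abs{\xi}^{-1}_g\phi$ and on $a_{-1}$, the latter being the order~$-1$ estimate furnished by lemma~\ref{lma:stein}.
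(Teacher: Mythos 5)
Your proof is correct and follows essentially the same route as the paper: you obtain the singularity at the origin by writing $b = \mathcal{F}(k_{-1}(x,\cdot)) - a_{-1}$ and invoking Lemma~\ref{lma:singularity} together with the regularity of the Fourier transform of the compactly supported truncated kernel, and you get the decay $\abs{b(x,\xi)} \le C\abs{\xi}^{2-k}$ by transferring $k-2$ of the $z$-derivatives (integrable away from the origin once their order is at least two) onto the amplitude. The only cosmetic differences are that the paper uses the Riemann--Lebesgue lemma where you use the $L^1$-to-$L^\infty$ bound, and that your explicit treatment of the borderline case $k=3$ and of the distributional convergence issues is somewhat more careful than the paper's.
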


\begin{proof}
The fact that $b(x,\xi)$ has a singularity of type $\abs{\xi}_{g(x)}^{-1}$ at the origin follows from the fact that $b(x,\xi) = a(x,\xi) - C\psi(x)\abs{\xi}^{-1}_{g(x)}\phi(x)$ near $\xi = 0$ and $a \in C^{k-2}_xC^\infty_\xi(\R^n \times \R^n)$.

Next, we prove the claim about the decay of~$b$ away from $\xi = 0$. Since $(1-\chi(z))k_{-1}(x,z) = 0$ for~$z$ near the origin and since
\begin{equation}
(1-\chi(z))
k_{-1}(x,z)
=
(1-\chi(z))
\psi(x)
\frac{2\det(g(x))^{1/2}}{(g_{jk}(x)z^jz^k)^{\frac{n-1}{2}}}
\phi(x)
\end{equation}
for $z \ne 0$, we know that $(1-\chi)k_{-1}$ is in $C^{k-2}(\R^n \times (\R^n\setminus \{0\}))$ and compactly supported in~$x$. As in the proof of lemma~\ref{lma:principal-part} we can use boundedness of the derivatives of $h(x,0,z\abs{z}^{-1})$ to prove that
\begin{equation}
\abs{
\partial_z^\alpha k_{-1}(x,z)
}
=
\abs{
\partial_z^\alpha 
\left(
(1-\chi(z))
\abs{z}^{1-n}
h
\left(
x,0,\frac{z}{\abs{z}}
\right)
\right)
}
\le
C_{\alpha}
\abs{z}^{-n-1}
\end{equation}
for $2 \le \abs{\alpha} \le k-2$ which proves that for a fixed~$x$ we have $\partial_z^\alpha k_{-1}(x,z) \in L^1(\R^n)$. Therefore by the Riemann--Lebesgue lemma we conclude that
\begin{equation}
\label{eqn:riem-leb-application}
\begin{split}
&
\abs{\xi^\alpha
\mathcal{F}((1-\chi(\,\cdot\,))k_{-1}(x,\,\cdot\,))(\xi)
}
\\&\quad
=
\abs{
\mathcal{F}(\partial^\alpha_z((1-\chi(\,\cdot\,))k_{-1}(x,\,\cdot\,)))(\xi)
}
\\&\quad
\to
0
\end{split}
\end{equation}
for all $2 \le \abs{\alpha} \le k-2$ as $\abs{\xi} \to \infty$. Thus since~\ref{eqn:riem-leb-application} holds for all $2 \le \abs{\alpha} \le k-2$ we have $\abs{b(x,\xi)} \le C\abs{\xi}^{2-k}$ for~$\abs{\xi}$ large enough.
\end{proof}

Lemmas~\ref{lma:principal-part},~\ref{lma:singularity} and~\ref{lma:second-part-of-the-psymbol} together prove the following corollary.

\begin{corollary}
\label{cor:classes-of-N}
Let $(M,g)$ be a simple manifold with $g \in C^k(M)$ for some $k \ge 5$. Then~$N \in \Psi^{-1}(k-s,s-4)$ for all $s \in \N$ with $4 \le s \le k$. The principal symbol of~$N$ is
\begin{equation}
\label{eqn:principal-symbol}
a_{-1}(x,\xi) = C\psi(x) \abs{\xi}^{-1}_g \phi(x) - b(x,\xi) \in S^{-1}(k-s,s-4)
\end{equation}
where~$b$ is as in lemma~\ref{lma:second-part-of-the-psymbol} and $s \in \N$ with $4 \le s \le k$, in particular this shows that~$N$ is elliptic of order $-1$ in the sense of principal symbol.
\end{corollary}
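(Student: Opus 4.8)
The plan is to assemble the three preceding lemmas: the analytic work has already been done, so the proof reduces to bookkeeping together with one short ellipticity estimate. Recall from the discussion preceding the corollary that the full symbol decomposes as $a = a_{-1} + c$, and that inside $a_{-1}$ we split the Fourier transform using $\chi(z)k_{-1}(x,z) = k_{-1}(x,z) - (1-\chi(z))k_{-1}(x,z)$.

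First I would extract the explicit form of $a_{-1}$. Taking the Fourier transform in $z$ and using linearity, the term $\mathcal{F}(k_{-1}(x,\,\cdot\,))(\xi)$ is evaluated by Lemma~\ref{lma:singularity} as $C\psi(x)\abs{\xi}^{-1}_{g(x)}\phi(x)$, while $\mathcal{F}((1-\chi(\,\cdot\,))k_{-1}(x,\,\cdot\,))(\xi)$ is by definition the function $b(x,\xi)$ of Lemma~\ref{lma:second-part-of-the-psymbol}. Subtracting yields exactly formula~\eqref{eqn:principal-symbol}. The symbol-class assertions are then immediate: $N \in \Psi^{-1}(k-s,s-4)$ follows from Lemma~\ref{lma:full-symbol}, and $a_{-1} \in S^{-1}(k-s,s-4)$ for $4 \le s \le k$ is Lemma~\ref{lma:principal-part}.

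The only genuine step is ellipticity. On the region where the cut-offs satisfy $\psi(x)\phi(x) \ne 0$, the leading term $C\psi(x)\abs{\xi}^{-1}_{g(x)}\phi(x)$ is bounded below by $c\abs{\xi}^{-1}$, since $g(x)$ is a positive-definite metric and hence $\abs{\xi}_{g(x)}$ is comparable to the Euclidean $\abs{\xi}$. By Lemma~\ref{lma:second-part-of-the-psymbol} the correction obeys $\abs{b(x,\xi)} \le C\abs{\xi}^{2-k}$ for large $\abs{\xi}$, and because $k \ge 5$ we have $2-k \le -3 < -1$, so $b$ decays strictly faster than the leading term. Consequently $\abs{a_{-1}(x,\xi)} \ge \tfrac{c}{2}\abs{\xi}^{-1}$ once $\abs{\xi}$ is large enough, which is ellipticity of order $-1$ in the sense of principal symbol. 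I expect this to be the only point requiring any argument, and even here the obstacle is minor: one need only confirm that the faster decay of $b$ prevents it from cancelling the leading singularity.
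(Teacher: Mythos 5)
Your proposal is correct and follows essentially the same route as the paper, which offers no separate proof and simply states that Lemmas~\ref{lma:principal-part}, \ref{lma:singularity} and~\ref{lma:second-part-of-the-psymbol} together yield the corollary. Your explicit lower bound $\abs{a_{-1}(x,\xi)} \ge \tfrac{c}{2}\abs{\xi}^{-1}$ for large $\abs{\xi}$ (valid where the cut-offs are bounded away from zero, e.g.\ on the set where $\psi = 1 = \phi$) is exactly the ellipticity check the paper leaves implicit.
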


The function~$a_{-1}$ is a function on the whole cotangent bundle and thus~$b$ has to have a singularity of type $\abs{\xi}^{-1}_{g(x)}$ at $\xi = 0$ to cancel out the singularity in $C\psi(x)\abs{\xi}^{-1}_g\phi(x)$.

\subsection{Parametrix construction}

In this section we construct a leading order parametrix for the normal operator. The construction is based on a commutator result in~\cite{marschall1996}. We define $p(x,\xi) \coloneqq C^{-1}\zeta(\xi)\abs{\xi}_{g(x)}$ for some $\zeta \in C^\infty(\R^n)$ so that $0 \le \zeta \le 1$, $\zeta = 0$ near $\xi = 0$ and $\zeta = 1$ for large~$\xi$, and where~$C$ is the same dimensional constant as in lemma~\ref{lma:singularity}. We will prove that the operator corresponding to the symbol~$p$ which is in $S^1(k-s,N)$ for all $s \in \N$ with $4 \le s \le k$ and $N \in \N$ provides the parametrix to the leading order.

\begin{lemma}
\label{lma:commutator-application}
Let $(M,g)$ be a simple manifold with $g \in C^k(M)$ for some $k \ge 7 + \frac{n}{2}$. Let $P = \Op(p)$. If $\tau \in (0,1]$ is fixed then the operator
\begin{equation}
PN
-
\Op(pa)
\colon
H^{t-\tau}(\R^n)
\to
H^t(\R^n)
\end{equation}
is continuous when $-(1-\tau)(k-5-\frac{n}{2}-\tau) < t < k-6-\frac{n}{2}$.
\end{lemma}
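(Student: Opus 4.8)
The plan is to read this off directly from the non-smooth commutator theorem, Lemma~\ref{lma:composition}, applied to the symbols of $N$ and $P$. By Corollary~\ref{cor:classes-of-N} we have $N = \Op(a)$ with $a \in S^{-1}(k-s,s-4)$ for every integer $s$ with $4 \le s \le k$, and $a$ is smooth in $\xi$ with $\xi$-derivative estimates up to order $s-4$. On the other hand $p(x,\xi) = C^{-1}\zeta(\xi)\abs{\xi}_{g(x)}$ is $C^k$ in $x$ and genuinely smooth in $\xi$, the cut-off $\zeta$ removing the only singularity at $\xi=0$, so $p \in S^1(k-s,L)$ for every $L \in \N$. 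First I would match the two factors to the template $QP - \Op(qp)$ of Lemma~\ref{lma:composition} by taking the inner factor to be $N$, i.e. the lemma's $p$ is our $a$ with $m_1 = -1$ and $(\rho_1,\delta_1) = (1,0)$, and the outer factor to be $P$, i.e. the lemma's $q$ is our $p$ with $m_2 = 1$ and $(\rho_2,\delta_2) = (1,0)$; with this identification $QP - \Op(qp)$ is precisely $PN - \Op(pa)$. It is essential here that the outer factor, which Lemma~\ref{lma:composition} requires to have the extra $\frac{n}{2}+1$ orders of $\xi$-regularity, is the smooth symbol $p$, so that this demand costs nothing, while the inner factor $a$ is asked only for the $L = s-4$ orders that Corollary~\ref{cor:classes-of-N} actually supplies.

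Next I would fix the free regularity index. Choosing $s = 5 + \frac{n}{2}$ gives $r = k-s = k-5-\frac{n}{2}$ and $L = s-4 = 1+\frac{n}{2}$; when $n$ is odd one replaces $s$ by the nearest admissible integer, which only shifts the endpoints, and I will record the bounds in the clean form stated. It then remains to verify the hypotheses of Lemma~\ref{lma:composition}: $\delta_1 = 0 < 1 = \rho_2$; $L = 1+\frac{n}{2} > \frac{n}{2}$; and since $k \ge 7+\frac{n}{2}$ forces $r = k-5-\frac{n}{2} \ge 2$, every $\tau \in (0,1]$ satisfies $0 < \tau < r$. Computing the derived exponents, $\rho = \min\{\rho_1,\rho_2\} = 1$ and $\delta = \max\{\delta_1 + (\rho_1-\delta_2)\tau,\,\delta_2\} = \tau$, so $\delta = \tau \le 1 = \rho$; and because $\rho = 1$ the extra constraint $r > \frac{1-\rho}{1-\delta}\frac{n}{2}+\tau$, needed only for $\rho < 1$, is vacuous.

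With the hypotheses in place the conclusion is read off mechanically. The order of the commutator is $m_1 + m_2 - (\rho_1-\delta_2)\tau = -\tau$, so $QP - \Op(qp)$ maps $H^{t-\tau}(\R^n) \to H^t(\R^n)$, which is the asserted mapping property. Finally, substituting $m_2 = 1$, $\rho = 1$, $\delta = \tau$ and $r = k-5-\frac{n}{2}$ into the admissible range
\[
\max\{-m_2,0\} + (1-\rho)\frac{n}{2} - (1-\delta)(r-\tau) < t < r - \max\{m_2,0\}
\]
collapses the left-hand side to $-(1-\tau)\left(k-5-\frac{n}{2}-\tau\right)$ and the right-hand side to $k-6-\frac{n}{2}$, which is exactly the stated interval.

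I expect the only real work to be bookkeeping: correctly tracking the (deliberately transposed) subscript convention for the second factor in Lemma~\ref{lma:composition}, assigning the smooth symbol $p$ to the outer slot so that the extra $\xi$-regularity demand is met, and choosing the index $s$ so that the resulting interval is sharp rather than merely nonempty. There is no genuine analytic obstacle beyond confirming that $p$ lies in $S^1(k-s,L)$ with enough $x$-regularity, which holds because $\abs{\xi}_{g(x)}$ is $C^k$ in $x$ away from $\xi = 0$ and $\zeta$ suppresses the origin.
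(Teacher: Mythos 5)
Your proposal is correct and follows essentially the same route as the paper: both apply Lemma~\ref{lma:composition} with the inner factor $a\in S^{-1}(k-s,s-4)$, the outer factor $p\in S^{1}(k-s,L)$ for arbitrary $L$, the parameters $m_1=-1$, $m_2=1$, $\rho=1$, $\delta=\tau$, and the choice $s\approx 5+\frac n2$ to obtain the stated interval. The only cosmetic difference is that the paper keeps $s$ general and selects $s_t$ at the end for each $t$, whereas you fix $s$ up front; the resulting range is the same.
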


\begin{proof}
Choose $s \in \N$ so that $s \in (4+\frac{n}{2},k-1)$ which is possible since $k \ge 7 + \frac{n}{2}$. Let $L \coloneqq s-4$ and let $r \coloneqq k-s$. Then $L > \frac{n}{2}$ and $r > 1 \ge \tau$. By lemma~\ref{lma:full-symbol} we have $N \in \Psi^{-1}(r,L)$ and also it holds that $P \in \Psi^1(r,L+1+\frac{n}{2})$, which means that we are in the setting of lemma~\ref{lma:composition}. For~$\delta$ and~$\rho$ as the lemma it holds that
\begin{equation}
\delta
=
\tau,
\quad
\rho
=
1
\quad
\text{and}
\quad
\delta
<
\rho.
\end{equation}
Thus since $m_1 = -1$ and $m_2 = 1$ in the lemma the commutator
\begin{equation}
PN - \Op(pa)
\colon
H^{t-\tau}(\R^n)
\to
H^t(\R^n)
\end{equation}
is continuous for
\begin{equation}
\max\{-1,0\}
-
(1-\tau)
(r-\tau)
<
t
<
r
-
\max\{1,0\}
\end{equation}
which simplifies to
\begin{equation}
-(1-\tau)
(k-s-\tau)
<
t
<
k-s-1.
\end{equation}
To have a non-empty range of indices~$t$ we must have
\begin{equation}
k-s
>
\frac{1+(1-\tau)\tau}{2-\tau}
\end{equation}
which is satisfied since $s < k-1$ and by an elementary computation it holds that $\frac{1+(1-\tau)\tau}{2-\tau} \le 1$ for all $\tau \in (0,1]$.

Finally to conclude the proof we note that if
\begin{equation}
-(1-\tau)(k-5-\frac{n}{2}-\tau)
<
t
<
k-6-\frac{n}{2}
\end{equation}
there is $s_t \in \N$ so that $s_t \in [4+\frac{n}{2},k-1)$ and
\begin{equation}
-(1-\tau)(k-s_t-\tau)
<
t
<
k-s_t-1
\end{equation}
since $k \ge 7 + \frac{n}{2}$ and thus the operator $PN - \Op(pa) \colon H^{t-\tau}(\R^n) \to H^t(\R^n)$ is continuous as claimed.
\end{proof}

\begin{lemma}
\label{lma:p-aminus}
Let $(M,g)$ be a simple manifold with $g \in C^k(M)$ for some $k \ge 7 + \frac{n}{2}$. Then $\Op(pa_{-1}) = \Id + R_1$ where $\Id$ is an operator acting as the identity on elements in $H^{t+2-k}(\R^n)$ which are supported in the set where $\psi = 1 = \phi$ and the remainder
\begin{equation}
R_1
\colon
H^{t+2-k}(\R^n)
\to
H^t(\R^n)
\end{equation}
is continuous when $-k+2 < t < k-2$.
\end{lemma}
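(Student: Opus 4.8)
The plan is to multiply out the symbol $pa_{-1}$, split $\Op(pa_{-1})$ into a piece that reproduces the identity and a piece that is smoothing of high order, and then read off the mapping property from lemma~\ref{lma:sobolev-property}. Using the formula for $a_{-1}$ from corollary~\ref{cor:classes-of-N} and the definition $p(x,\xi)=C^{-1}\zeta(\xi)\abs{\xi}_{g(x)}$, a direct computation in which the two factors $\abs{\xi}_{g}$ cancel in the principal term gives
\begin{equation}
p(x,\xi)a_{-1}(x,\xi)
=
\zeta(\xi)\psi(x)\phi(x)
-
C^{-1}\zeta(\xi)\abs{\xi}_{g(x)}b(x,\xi).
\end{equation}
Accordingly I would write $\Op(pa_{-1}) = \Op(\zeta\psi\phi) - C^{-1}\Op(\zeta\abs{\xi}_gb)$ and treat the two summands separately.

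For the first summand, since $\zeta$ depends only on $\xi$ and $\psi\phi$ only on $x$, one has $\Op(\zeta\psi\phi)=\psi\phi\,\Op(\zeta)$. Writing $\Op(\zeta)=\Id-\Op(1-\zeta)$ and using that $1-\zeta$ is a compactly supported Fourier multiplier, I would observe that $\Op(1-\zeta)$ is smoothing of infinite order and that multiplication by the smooth compactly supported function $\psi\phi$ is bounded on every $H^t$, so $\psi\phi\,\Op(1-\zeta)$ maps $H^{t+2-k}(\R^n)$ to $H^t(\R^n)$ for every $t$. On functions supported in the set where $\psi=1=\phi$ the factor $\psi\phi$ acts trivially on the identity term, whence $\Op(\zeta\psi\phi)=\Id-\psi\phi\,\Op(1-\zeta)$ there; this produces the claimed identity operator and contributes only a harmless smoothing term to $R_1$.

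The core of the argument is the second summand $C^{-1}\Op(\zeta\abs{\xi}_gb)$, which should be smoothing of order $k-2$. Here I would use lemma~\ref{lma:second-part-of-the-psymbol}: $b$ is smooth in $\xi$ away from the origin, its origin singularity is annihilated by $\zeta$, and it decays as $\abs{\xi}\to\infty$. The decay rate is the \emph{delicate point}: the bound $\abs{b}\le C\abs{\xi}^{2-k}$ only makes $\zeta\abs{\xi}_gb$ of order $3-k$, one derivative short of the claim. To recover it I would exploit that $k_{-1}(x,z)$ is $C^\infty$ in $z$ for $z\ne 0$, being a non-vanishing smooth expression raised to a fixed power; this allows one to differentiate $(1-\chi)k_{-1}$ in $z$ once more than the generic $C^{k-2}$ kernel bound permits, and the Riemann--Lebesgue argument of lemma~\ref{lma:second-part-of-the-psymbol} then improves the decay of $b$ to order $1-k$. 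Since $(1-\chi)k_{-1}$ is smooth in $z$, $b$ also satisfies $\xi$-symbol estimates to every order $L$ (unlike the full symbol $a$), while being only $C^{k-2}$ in $x$; thus $\zeta\abs{\xi}_gb\in S^{2-k}_{1,0}(k-2,L)$ for all $L$, with admissible regularity index $r=k-2$.

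With $\zeta\abs{\xi}_gb\in S^{2-k}_{1,0}(k-2,L)$ for some $L>\frac n2$, I would apply lemma~\ref{lma:sobolev-property} with $\rho=1$, $\delta=0$ and $r=k-2$. The hypotheses $\delta\le\rho$, $L>\frac n2$ and $r>\frac{1-\rho}{1-\delta}\frac n2=0$ all hold, and the admissible range $(1-\rho)\frac n2-(1-\delta)r<s<r$ collapses to exactly $-k+2<s<k-2$; taking $s=t$ and $m=2-k$ yields boundedness $\Op(\zeta\abs{\xi}_gb)\colon H^{t+2-k}(\R^n)\to H^t(\R^n)$ on precisely this range. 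Collecting the two summands, $R_1=-\psi\phi\,\Op(1-\zeta)-C^{-1}\Op(\zeta\abs{\xi}_gb)$ is continuous from $H^{t+2-k}(\R^n)$ to $H^t(\R^n)$ for $-k+2<t<k-2$, as claimed. I expect the main obstacle to be exactly the sharp bookkeeping of the third paragraph: pushing the smoothing order up to $k-2$ rather than $k-3$ while respecting that the symbol is only finitely smooth in $x$, a constraint that simultaneously fixes the attainable smoothing order and pins down the endpoints of the Sobolev range.
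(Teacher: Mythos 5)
Your proof is correct and follows essentially the same route as the paper: expand $pa_{-1}$, peel off $\psi\phi$ as the identity plus an infinitely smoothing compactly supported multiplier, and bound the $b$-term via lemma~\ref{lma:sobolev-property} as a symbol of order $2-k$ with $x$-regularity $k-2$ on the range $-k+2<t<k-2$. You are in fact more careful than the paper's displayed computation, which drops the factor $\abs{\xi}_{g(x)}$ from the product of $p$ with $b$; your observation that $k_{-1}$ is smooth in $z$ away from the origin, so that the Riemann--Lebesgue argument upgrades the decay of $b$ beyond $\abs{\xi}^{2-k}$, is exactly what is needed to absorb that extra factor and still land in $S^{2-k}$.
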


\begin{proof}
By corollary~\ref{cor:classes-of-N} the principal symbol~$a_{-1}$ of~$N$ can be decomposed as
\begin{equation}
a_{-1}(x,\xi)
=
C\psi(x)\abs{\xi}^{-1}_{g(x)}\phi(x)
-
b(x,\xi)
\end{equation}
where $b(x,\xi)$ is in $C^{k-2}_xC^\infty_\xi(\R^n \times (\R^n \setminus \{0\}))$ which is compactly supported in~$x$ and decays faster than~$\abs{\xi}^{2-k}$ in~$\xi$. Therefore
\begin{equation}
\begin{split}
p(x,\xi)a_{-1}(x,\xi)
&=
\zeta(\xi)\psi(x)\phi(x)
-
C^{-1}\zeta(\xi)b(x,\xi)
\\
&=
\psi(x)\phi(x)
-
(1-\zeta(\xi))\psi(x)\phi(x)
-
C^{-1}\zeta(\xi)b(x,\xi).
\end{split}
\end{equation}
Since $(1-\zeta(\xi))\psi(x)\phi(x)$ is smooth and compactly supported, it decays faster than $\abs{\xi}^{-l}$ for any $l \in \N$. Since $\psi(x)\phi(x)$ equals to $1$ on in the set where $\psi = 1 = \phi$ the corresponding operator acts as the identity on functions in $H^{t+2-k}(\R^n)$ which are supported in this set. Also, by lemma~\ref{lma:second-part-of-the-psymbol} the function $\zeta(\xi)b(x,\xi)$ decays faster than~$\abs{\xi}^{2-k}$. Therefore, since the support in~$x$ is compact and $b \in C^{k-2}_xC^\infty_\xi(\R^n\times(\R^n \setminus\{0\}))$ and $\zeta(\xi) = 0$ near $\xi = 0$ it follows from the definitions that
\begin{equation}
\tilde b(x,\xi)
\coloneqq
-
(1-\zeta(\xi))\psi(x)\phi(x)
-
C^{-1}\zeta(\xi)b(x,\xi)
\end{equation}
is a symbol in the class $S^{2-k}(k-2,1+\lfloor\frac{n}{2}\rfloor)$. Therefore by lemma~\ref{lma:sobolev-property} that $\Op(\tilde b) \colon H^{t+2-k}(\R^n) \to H^t(\R^n)$ for all $-k+2 < t < k-2$ since $1 + \lfloor\frac{n}{2}\rfloor > \frac{n}{2}$, which proves the claim.
\end{proof}

\begin{lemma}
\label{lma:p-c}
Let $(M,g)$ be a simple manifold with $g \in C^k(M)$ for some $k \ge 7 + \frac{n}{2}$. Then the operator
\begin{equation}
\Op(pc)
\colon
H^{t-1}(\R^n)
\to
H^t(\R^n)
\end{equation}
is continuous when $-k+6+\frac{n}{2} < t < k-6-\frac{n}{2}$.
\end{lemma}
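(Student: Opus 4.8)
The plan is to show that the pointwise product symbol $pc$ lies in a non-smooth symbol class of order $-1$ and then to read off the mapping property directly from lemma~\ref{lma:sobolev-property}. Recall that $p \in S^1(k-s,N)$ for every $N \in \N$ and every integer $s \in [4,k]$, while lemma~\ref{lma:principal-part} gives $c \in S^{-2}(k-s,s-5)$ for every integer $s \in [5,k]$. Both symbols carry the parameters $\rho = 1$ and $\delta = 0$, so I would aim to prove that $pc \in S^{-1}(k-s,s-5)$ for a suitable common choice of $s$, and then invoke lemma~\ref{lma:sobolev-property} with $m = -1$, $\rho = 1$, $\delta = 0$.

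First I would verify the two defining estimates for $pc$. For the $\xi$-estimates the Leibniz rule gives
\begin{equation}
\partial_\xi^\alpha(pc)
=
\sum_{\beta \le \alpha}
\binom{\alpha}{\beta}
(\partial_\xi^\beta p)(\partial_\xi^{\alpha-\beta}c),
\end{equation}
and multiplying the bounds $\abs{\partial_\xi^\beta p} \le C(1+\abs{\xi})^{1-\abs{\beta}}$ and $\abs{\partial_\xi^{\alpha-\beta}c} \le C(1+\abs{\xi})^{-2-\abs{\alpha-\beta}}$ yields $\abs{\partial_\xi^\alpha(pc)} \le C(1+\abs{\xi})^{-1-\abs{\alpha}}$ for $\abs{\alpha} \le s-5$. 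For the $x$-regularity estimates I would use the same Leibniz expansion together with the multiplication property of the Zygmund spaces: since $r \coloneqq k-s > 0$, one has $\norm{uv}_{C^r_\ast} \le C\norm{u}_{C^r_\ast}\norm{v}_{C^r_\ast}$. Applying this to each term $(\partial_\xi^\beta p)(\partial_\xi^{\alpha-\beta}c)$ and using the $C^r_\ast$-bounds coming from the classes of $p$ and $c$ gives $\norm{\partial_\xi^\alpha(pc)(\,\cdot\,,\xi)}_{C^r_\ast} \le C(1+\abs{\xi})^{-1-\abs{\alpha}}$ for $\abs{\alpha} \le s-5$, which is exactly the second estimate for the class $S^{-1}(k-s,s-5)$ when $\delta = 0$.

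It then remains to choose $s$ so that lemma~\ref{lma:sobolev-property} delivers the claimed range. With $r = k-s$ and $L = s-5$ the hypotheses of lemma~\ref{lma:sobolev-property} reduce to $L > \frac{n}{2}$ and $r > 0$ (the condition $r > \frac{1-\rho}{1-\delta}\frac{n}{2}$ is vacuous since $\rho = 1$), and the resulting mapping range for $\Op(pc) \colon H^{t-1}(\R^n) \to H^t(\R^n)$ is $-r < t < r$. I would therefore pick the unique integer $s$ with $5 + \frac{n}{2} < s \le 6 + \frac{n}{2}$; this forces $L = s-5 > \frac{n}{2}$ and $r = k-s \ge k-6-\frac{n}{2}$, and since $k \ge 7 + \frac{n}{2}$ we also get $r \ge 1 > 0$ and $s < k$. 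Consequently the window $-r < t < r$ contains the interval $-k+6+\frac{n}{2} < t < k-6-\frac{n}{2}$ asserted in the lemma.

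The only genuinely delicate point is the symbol-product step, and specifically the passage to the $C^r_\ast$-estimates: in the smooth theory the pointwise product of symbols is immediate, but here the $x$-regularity is only finite and is measured in Zygmund norms, so the argument must lean on the fact that $C^r_\ast$ is a multiplication algebra for $r > 0$. Balancing the two competing demands, namely enough $\xi$-smoothness ($L = s-5 > \frac{n}{2}$) against as much $x$-regularity as possible ($r = k-s$ large), is what pins down the admissible window for $s$ and hence the final Sobolev range; the remaining estimates are routine.
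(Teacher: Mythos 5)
Your proof is correct and follows essentially the same route as the paper: place the product symbol $pc$ in $S^{-1}(k-s,s-5)$ and apply Lemma~\ref{lma:sobolev-property}. The paper simply asserts the product class membership (which you justify via the Leibniz rule and the multiplication algebra property of $C^r_\ast$ for $r>0$) and chooses a $t$-dependent integer $s_t$, whereas your single choice of $s$ just above $5+\frac{n}{2}$ yields the whole interval at once; both are fine.
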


\begin{proof}
Let $s \in \N$ be so that $s \in (5+\frac{n}{2},k)$. Since~$p$ is in $S^1(k-s,s-5)$ and~$c$ is in $S^{-2}(k-s,s-5)$ by lemma~\ref{lma:principal-part} the product~$pc$ is in $S^{-1}(k-s,s-5)$. Furthermore, since $s-5 > \frac{n}{2}$ it follows from lemma~\ref{lma:sobolev-property} that~$\Op(pc)$ continuously maps from $H^{t-1}(\R^n)$ to $H^t(\R^n)$ for all $-k+s < t < k-s$. To see that the continuous mapping property holds for all $-k+6+\frac{n}{2} < t < k-6-\frac{n}{2}$, we note that given any such~$t$ we can choose any $s_t \in \N$ so that $s_t \in (5+\frac{n}{2},k-t)$ when $t \ge 0$ or $s_t \in (5+\frac{n}{2},k+t)$ when $t < 0$ and it holds that $s_t \in \N$ with $s_t \in (5+\frac{n}{2},k)$ and $-k+s_t < t < k-s_t$. This finishes the proof.
\end{proof}

\begin{lemma}
\label{lma:parametrix}
Let $(M,g)$ be a simple manifold with $g \in C^k(M)$ for some $k \ge 7 + \frac{n}{2}$. Let $P = \Op(p)$. Then there is $\eps > 0$ so that $PN = \Id + R$
where $\Id$ is an operator acting as the identity on elements in $H^{t-\tau}(\R^n)$ which are supported in the set where $\psi = 1 = \phi$ and the remainder
\begin{equation}
R
\colon
H^{t-\tau}(\R^n) 
\to 
H^t(\R^n)
\end{equation}
is continuous whenever $0 < \tau \le \eps$ and
\begin{equation}
-k+6+\frac{n}{2}
<
t
<
k-6-\frac{n}{2}.
\end{equation}
\end{lemma}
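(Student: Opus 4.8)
The plan is to write $PN$ as a perturbation of $\Op(pa)$ and then exploit the symbol splitting $a = a_{-1} + c$ underlying lemma~\ref{lma:principal-part} and corollary~\ref{cor:classes-of-N}. Concretely, I would start from
\begin{equation*}
PN = \bigl(PN - \Op(pa)\bigr) + \Op(pa_{-1}) + \Op(pc)
\end{equation*}
and substitute the identity $\Op(pa_{-1}) = \Id + R_1$ from lemma~\ref{lma:p-aminus}, obtaining $PN = \Id + R$ with
\begin{equation*}
R = \bigl(PN - \Op(pa)\bigr) + R_1 + \Op(pc),
\end{equation*}
where $\Id$ is multiplication by $\psi\phi$, which acts as the identity on functions supported in the set where $\psi = 1 = \phi$. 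It then suffices to show that, for a suitable $\eps$ and all $0 < \tau \le \eps$, each of the three summands is continuous $H^{t-\tau}(\R^n) \to H^t(\R^n)$ on the asserted range of $t$.

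For the three summands I would invoke lemmas~\ref{lma:commutator-application},~\ref{lma:p-aminus} and~\ref{lma:p-c} respectively, and then reconcile their different smoothing orders via Sobolev embeddings. The commutator term maps $H^{t-\tau} \to H^t$ directly by lemma~\ref{lma:commutator-application} on the range $-(1-\tau)(k-5-\frac{n}{2}-\tau) < t < k-6-\frac{n}{2}$. The term $R_1$ maps $H^{t+2-k} \to H^t$ by lemma~\ref{lma:p-aminus}, and since $t - \tau \ge t + 2 - k$ for $0 < \tau \le k-2$, the embedding $H^{t-\tau} \hookrightarrow H^{t+2-k}$ lets me precompose and regard $R_1$ as continuous $H^{t-\tau} \to H^t$ on $-k+2 < t < k-2$. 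Likewise $\Op(pc)$ maps $H^{t-1} \to H^t$ by lemma~\ref{lma:p-c}, and $H^{t-\tau} \hookrightarrow H^{t-1}$ for $\tau \le 1$ gives continuity $H^{t-\tau} \to H^t$ on $-k+6+\frac{n}{2} < t < k-6-\frac{n}{2}$. The assertion that $\Id$ acts as the identity on the relevant subspace is inherited from lemma~\ref{lma:p-aminus} through the same embedding $H^{t-\tau} \hookrightarrow H^{t+2-k}$.

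It then remains to intersect the three $t$-ranges. The binding upper bound is $t < k-6-\frac{n}{2}$, shared by the commutator term and $\Op(pc)$, and the binding lower bound is $t > -k+6+\frac{n}{2}$, coming from $\Op(pc)$; the range for $R_1$ is strictly wider on both sides. The one point needing care is the lower endpoint of the commutator range, $-(1-\tau)(k-5-\frac{n}{2}-\tau)$, which depends on $\tau$ and increases as $\tau$ grows from $0$. At $\tau = 0$ it equals $-k+5+\frac{n}{2}$, lying strictly below the target lower bound $-k+6+\frac{n}{2}$, so I would choose $\eps > 0$ small enough that this endpoint stays below $-k+6+\frac{n}{2}$ for all $0 < \tau \le \eps$; this is possible because the endpoint depends continuously on $\tau$ and $k-5-\frac{n}{2} \ge 2$ is fixed. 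With this $\eps$ the intersection of all three ranges contains $-k+6+\frac{n}{2} < t < k-6-\frac{n}{2}$, and the continuity of $R$ follows. The only genuine obstacle is this final index bookkeeping --- ensuring the $\tau$-dependent commutator range does not encroach on the target interval --- while the algebraic splitting and the Sobolev embeddings are routine once the earlier lemmas are available.
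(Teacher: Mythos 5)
Your proposal is correct and follows essentially the same route as the paper: the same decomposition $PN = (PN-\Op(pa)) + \Op(pa_{-1}) + \Op(pc)$, the same three lemmas for the three summands, and the same choice of small $\eps$ to keep the $\tau$-dependent lower endpoint $-(1-\tau)(k-5-\frac{n}{2}-\tau)$ of the commutator range below $-k+6+\frac{n}{2}$. Your explicit Sobolev-embedding step and the evaluation of that endpoint at $\tau=0$ only spell out details the paper leaves implicit.
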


\begin{proof}
By lemma~\ref{lma:principal-part} we may write
\begin{equation}
\begin{split}
PN
&=
\Op(pa)
+
(PN - \Op(pa))
\\
&=
\Op(pa_{-1})
+
(PN - \Op(pa))
+
\Op(pc).
\end{split}
\end{equation}
Let $\tau \in (0,1]$. Then by lemma~\ref{lma:commutator-application} we have that
\begin{equation}
\label{eqn:comm}
PN - \Op(pa)
\colon 
H^{t-\tau}(\R^n)
\to
H^t(\R^n)
\end{equation}
is continuous for $-(1-\tau)(k-5-\frac{n}{2}-\tau) < t < k-6-\frac{n}{2}$. By lemmas~\ref{lma:p-aminus} and~\ref{lma:p-c} the operator $\Op(pa_{-1})$ is the identity up to an operator~$R_1$ that is smoothing by~$2$ degrees and~$\Op(pc)$ is smoothing by~$1$ degree, and therefore~$R_1$ and~$\Op(pc)$ are also smoothing by~$\tau$ degrees. More precisely, $\Op(pa_{-1}) = \Id + R_1$, and we have that
\begin{equation}
\label{eqn:r-1}
R_1
\colon
H^{t-\tau}(\R^n)
\to
H^t(\R^n)
\end{equation}
is continuous for $-k+2 < t < k-2$ and
\begin{equation}
\label{eqn:op-pc}
\Op(pc)
\colon
H^{t-\tau}(\R^n)
\to
H^t(\R^n)
\end{equation}
is continuous for $-k+6+\frac{n}{2} < t < k-6-\frac{n}{2}$. Letting~$R$ be the sum of the operators in~\eqref{eqn:comm},~\eqref{eqn:r-1} and~\eqref{eqn:op-pc} we find that $PN = \Id + R$. Now suppose that $\tau$ is close enough to zero. Then the remainder continuously maps $H^{t-\tau}(\R^n)$ to $H^t(\R^n)$ for
\begin{equation}
-k+6+\frac{n}{2}
<
t
<
k-6-\frac{n}{2}
\end{equation}
since $k-6-\frac{n}{2}$ is the smallest among the upper bound requirements and when $\tau$ is close to zero $-k+6+\frac{n}{2}$ is the largest of the lower bound. This proves the claimed identity and the mapping properties.
\end{proof}

\section{Proofs of main theorems}
\label{sec:proofs-of-main-theorems}

In the last section we show that the parametrix construction in lemma~\ref{lma:parametrix} in combination with the recent result~\cite[Theorem 1]{IlmKyk2021} can be used to prove our main results.

\begin{proof}[Proof of theorem~\ref{thm:elliptic-regularity}]
Let $f \in H^s_c(M)$ for some $s>-k+6+\frac{n}{2}$ and assume that $Nf = 0$. Let $\supp f \subseteq \Omega$. There is a cut-off function $\phi \in C^\infty_c(M)$ so that $\phi f = f$ and moreover there is a cut-off $\psi \in C^\infty_c(M)$ with $\psi = 1$ on $\Omega$ so that $Nf(x) = (\psi N \phi)f(x) = 0$ for all $x \in M$. The operator $\psi N\phi$ has Schwartz kernel of the form~\eqref{eqn:tilde-K} so by lemma~\ref{lma:parametrix} there is an operator $P$ and $\eps>0$ so that $P(\psi N \phi) = \Id + R$ where $\Id$ acts as the identity on elements in $H^t(\R^n)$ with support in $\Omega$ and $R \colon H^t_c(M) \to H^{t+\tau}(\R^n)$ is continuous for $\tau \in (0,\eps]$ and
\begin{equation}
-k+6+\frac{n}{2}-\tau
<
t
<
k-6-\frac{n}{2}-\tau.
\end{equation}
We may choose $\tau$ so small that $s > -k+6+\frac{n}{2}-\tau$. Then $f \in H^s_c(M)$ and
\begin{equation}
\phi f = P(\psi N\phi)f - Rf = -Rf.
\end{equation}
Thus $\phi f \in H^{t+\tau}(\R^n)$ and therefore $f \in H^{t+\tau}_c(M)$.

Then let $s < r < k-6-\frac{n}{2}$. By possibly choosing~$\tau$ to be even smaller we may assume that there is $m \in \N$ so that $r < s + m\tau < k-6-\frac{n}{2} -\tau$. Then by iterating $m$ times the argument in the previous paragraph we see that $f \in H^{s+m\tau}_c(M) \subseteq H^r_c(M)$ as claimed in the theorem.
\end{proof}

\begin{proof}[Proof of proposition~\ref{prop:on-l2}]
The composition of~$I$ and~$I^\ast$ was computed in~\cite[Lemma 8.1.5]{PSUbook} for $g \in C^\infty(M)$. The same computation works for $g \in C^k(M)$ when $k \ge 2$.
\end{proof}

\begin{proof}[Proof of theorem~\ref{thm:xrt-injective}]
Let $(\tilde M, \tilde g)$ be a simple extension of $(M,g)$ and let $\tilde I$ be the X-ray transform of $(\tilde M,\tilde g)$. Suppose that $f \in L^2(M)$ and $If = 0$. Then zero extension of $f$ to $\tilde M$ still denoted by $f$ satisfies $\tilde I f = 0$. Therefore $\tilde N f = \tilde I^\ast \tilde I f = 0$ by proposition~\ref{prop:on-l2} where $\tilde N$ and $\tilde I^\ast$ are the operators on $\tilde M$ defined by~\eqref{eqn:normal-operator} and~\eqref{eqn:back-projection} with all objects replaced by corresponding objects of $(\tilde M,\tilde g)$. Therefore by theorem~\ref{thm:elliptic-regularity} applied to the simple extension $(\tilde M,\tilde g)$ implies that $f \in H^r_c(\tilde M)$ for all $s < r < k-6+\frac{n}{2}$. Since $k \ge n + 8$ there is some $r \in \R$ so that $\lceil 1 + \frac{n}{2} \rceil < r < k-6+\frac{n}{2}$ and $f \in H^r_c(\tilde M)$. Sobolev embedding yields
\begin{equation}
H^r_c(\tilde M) \subseteq W^{1,\infty}(\tilde M) = \Lip(\tilde M).
\end{equation}
Thus $f \in \Lip(\tilde M)$ and since $f$ vanishes in $\tilde M \setminus M$ we have $f \in \Lip_0(M)$. We see that $f = 0$ since $I$ is injective on $\Lip(M)$ by~\cite[Theorem 1.]{IlmKyk2021} which finishes the proof.
\end{proof}

\bibliographystyle{plain}
\bibliography{references}

\end{document}